\newtheorem{theorem}{Theorem}[section]
\newtheorem*{theorem*}{Theorem A}
\newtheorem{lemma}[theorem]{Lemma}
\newtheorem{proposition}[theorem]{Proposition}
\newtheorem*{definition*}{Definition}
\newtheorem*{remark*}{Remark}
\newtheorem*{observation*}{Observation}
\newtheorem*{assumption*}{Assumption}
\theoremstyle{definition}
\newtheorem{definition}{Definition}[section]
\theoremstyle{remark}
\newtheorem{remark}{Remark}[section]
\newcommand{\B}{\mathbb{B}}
\begin{document}

\title{The one-weight inequality for $\mathcal{H}$-harmonic Bergman projection}

\author
{Kunyu Guo}
\address{Kunyu Guo\\
	School of Mathematical Sciences, Fudan University\\
	Shanghai, 200433, China}
\email{kyguo@fudan.edu.cn}

\author
{Zipeng Wang}
\address{Zipeng Wang\\
	College of Mathematics and Statistics, Chongqing University\\
	Chongqing, 401331, China}
\email{zipengwang2012@gmail.com}

\author
{Kenan Zhang}
\address
{Kenan Zhang\\
	School of Mathematical Sciences, Fudan University\\
	Shanghai, 200433, China}
\email{knzhang21@m.fudan.edu.cn}

\thanks{}

\begin{abstract}
	Let $n\geqslant 3$ be an integer. For the Bekoll\'e-Bonami weight $\omega$ on the real unit ball $\B_n$, we obtain the following sharp one-weight estimate for the $\mathcal{H}$-harmonic Bergman projection: for $1<p<\infty$ and $-1<\alpha<\infty$,
	 \[||P_\alpha||_{ L^p(\omega d\nu_\alpha)\longrightarrow L^p(\omega d\nu_\alpha)}\leqslant C  [\omega]_{p,\alpha}^{\max\left\{1,\frac{1}{p-1}\right\}},
	 \]
	where $[\omega]_{p,\alpha}$ is the Bekoll\'e-Bonami constant. Our proof is inspired by the dyadic harmonic analysis, and the key ingredient involves the discretization of the Bergman kernel for the $\mathcal{H}$-harmonic Bergman spaces.
\end{abstract}

\subjclass[2020]{42B20.}
\keywords{Bekoll\'e-Bonami weights, $\mathcal{H}$-harmonic Bergman projection, one-weight inequalities}

\maketitle

\setcounter{equation}{0}
\vspace{0.1in}
\section{Introduction}
Let $n\geqslant 3$ be an integer, and let $\B_n=\{x\in\mathbb{R}^n:|x|<1\}$ be the unit ball of the Euclidean space $\mathbb{R}^n$ where $|x|=(\sum_{i=1}^n|x_i|^2)^{1/2}$ is the standard Euclidean norm. We deonte by $\text{Aut}(\B_n)$ the M\"{o}bius transformation group on
the unit ball $\B_n$. It is also known that \cite[Chapter 2]{Stoll} the M\"{o}bius transformation group has the following explicit form
\[
\text{Aut}(\B_n)=\{A\circ\varphi_y:A\in O(n), y\in\B_n\},
\]
where $O(n)$  denotes the orthogonal group in $\mathbb{R}^n$, and
\[
\varphi_y(x)=\frac{y|x-y|^2+(1-|y|^2)(y-x)}{[x,y]^2},
\]
with $$[x,y]^2=|x-y|^2+(1-|y|^2)(1-|x|^2),\quad \forall x,y\in\B_n.$$

\vspace{0.1in}
Let $C^2(\B_n)$ be the space of complex-valued functions on the real unit ball $\B_n$ whose second-order partial derivatives are continuous on $\B_n$ and
consider the standard Laplace operator
\[
\Delta=\frac{\partial^2}{\partial x_1^2}+\cdots+\frac{\partial^2}{\partial x_n^2},
\]
then the invariant Laplace operator $\Delta_h$ is defined as a linear operator on $C^2(\B_n)$ such that for each $f\in C^2(\B_n)$ and $x\in\B_n$,
\[
	(\Delta_hf)(x)=\Delta(f\circ\varphi_x)(0).
\]
Moreover, by using the standard Laplace operator $\Delta$ and  the gradient operator 
$\nabla$, we have for each $x\in\B_n$:
\begin{equation}\label{E:ilaplace}
 (\Delta_hf)(x)=(1-|x|^2)^2\Delta f(x)+2(n-2)(1-|x|^2)\langle x,\nabla f(x)\rangle,
\end{equation}
where $\langle\cdot,\cdot\rangle$ 
denotes the standard Euclidean inner product on $\mathbb{R}^n$. Recall also the gradient operator 
$\nabla$ is given by
\[
\nabla=\left(\frac{\partial}{\partial x_1},\cdots,\frac{\partial}{\partial x_n}\right).
\]

\vspace{0.1in}
A complex-valued function $f\in C^2(\B_n)$ is said to be $\mathcal{H}$-harmonic if for each $x\in\B_n$,
\[
	(\Delta_hf)(x)=0.
\] 
Let $\mathcal{H}(\B_n)$ be the complex linear space of the ordinary  $\mathcal{H}$-harmonic functions.
If $n=2$, by the equality \eqref{E:ilaplace}, we have 
\[
(\Delta_hf)(x)=(1-|x|^2)^2\Delta f(x),
\]
for each $x\in\B_n$. 
In this case, the space $\mathcal{H}(\B_2)$ coincides with the space of harmonic functions on the unit disk.

Let $\nu$ be the normalized Lebesgue measure on $\B_n$ such that $\nu(\B_n)=1.$ 
For $-1<\alpha<\infty$, the standard weighted measure on the unit ball $\B_n$ is
\[
d\nu_\alpha(x)=c_\alpha (1-|x|^2)^\alpha d\nu(x),
\]
where $c_\alpha$ is the normalized constant such that $\nu_\alpha(\B_n)=1$.

\vspace{0.1in}
For $1\leqslant p<\infty$ and $-1<\alpha<\infty$, the $\mathcal{H}$-harmonic Bergman space $\mathcal{B}^p_\alpha$ is the  Banach space
\[
\mathcal{B}^p_\alpha =\{f\in\mathcal{H}(\B_n):\|f\|_{L^p(d\nu_\alpha)}<\infty\},
\]
where
\[
\|f\|_{L^p(d\nu_\alpha)}=\left[\int_{\B_n}|f(x)|^pd\nu_\alpha(x)\right]^{\frac{1}{p}}.
\]

\vspace{0.1in}

By the mean-value estimate, the point evaluations $f\mapsto f(x)$ at any $x\in\B_n$ are bounded on each $\mathcal{B}^2_\alpha,-1<\alpha<\infty$ \cite{BEY}. Therefore, the $\mathcal{H}$-harmonic Bergman space $\mathcal{B}^2_\alpha$ is a reproducing kernel Hilbert space. It follows that for any $x\in\B_n$, the reproducing kernel $\mathcal{R}_\alpha(x,\cdot)$ is the unique function in $\mathcal{B}^2_\alpha$, such that for all $f\in\mathcal{B}_\alpha^2$,
\begin{equation*}
	f(x)=\langle f(\cdot), \mathcal{R}_\alpha(x,\cdot)\rangle_{\alpha}=\int_{\B_n} \overline{\mathcal{R}_\alpha(x,y)}f(y) d\nu_\alpha(y).
\end{equation*}
One can show that $\mathcal{R}_\alpha(\cdot,\cdot)$ is a real-valued function. Hence, for any $f\in\mathcal{B}_\alpha^2$, we have
\begin{equation*}
	f(x)=\int_{\B_n} \mathcal{R}_\alpha(x,y)f(y) d\nu_\alpha(y)\qquad x\in\B_n.
\end{equation*}
For more details of  $\mathcal{H}$-harmonic functions and related function spaces on the unit ball, one can consult \cite{BEY,GJ,J06,Stoll,S12}.
\vspace{0.1in}

Recall that the $\mathcal{H}$-harmonic Bergman projection  $P_\alpha$ is the orthoganal projection from  $L^2(d\nu_\alpha)$ onto $ \mathcal{B}^2_\alpha$. Using the reproducing kernel property, we have
\[
	P_\alpha f(x)=\int_{\B_n} \mathcal{R}_\alpha(x,y)f(y)d\nu_\alpha(y),\quad \forall x\in\B_n.
\]
By \cite[Theorem 1.1]{Ureyen2023}, $P_\alpha$ is bounded on $L^p(d\nu_\alpha)$ for $1<p<\infty$ and $-1<\alpha<\infty$.

\vspace{0.1in}
In this paper, a weight $\omega$ is a non-negative integrable function on the real unit ball $\B_n$. The weighted Lebesgue space $L^p(\omega d\nu_\alpha)$ on $\B_n$ is given by
\[
L^p(\omega d\nu_\alpha)=\{f:\|f\|_{L^p(\omega d\nu_\alpha)}<\infty\},
\]
where
\[
\|f\|_{L^p(\omega d\nu_\alpha)}:=
\left[\int_{\B_n}|f(x)|^p\omega(x)d\nu_\alpha(x)\right]^{\frac{1}{p}}.
\]

\vspace{0.1in}
The weighted theory of Bergman projection-type operators on the unit ball, namely the operators whose integral kernels are reproducing kernels for some holomorphic or harmonic function spaces on the unit balls, has attracted much attention in recent years.  In particular, Bekoll\'e and Bonami \cite{B1981,B,BB1978}, as well as the ingenious work by Pott and Reguera \cite{PR2013}, present complete characterizations of the sharp linear norm estimate in the one-weight inequality for the holomorphic Bergman projections on the unit ball. In \cite{RTW}, Rahm, Tchoundja, and Wick extended these sharp one-weight inequalities to the setting of the Berezin transform on the unit ball.
Volberg and Wick \cite{VW} first applied non-homogeneous harmonic analysis techniques to weighted inequalities for Bergman projections, obtaining deep applications concerning Carleson measures for function spaces on the unit ball.
For further developments on weighted inequalities for holomorphic Bergman projections and their applications, we refer the reader to \cite{APR,GW2024,HWW,HW24,S2023}.

\vspace{0.1in}

Motivated by these developments on the weighted inequalities of holomorphic Bergman projection, it is natural to study the following one-weight inequality for $\mathcal{H}$-harmonic Bergman projection: 

\vspace{0.2cm}

\noindent \textbf{Problem.} Let $1<p<\infty$ and $-1<\alpha<\infty$. Characterize the weights $\omega$ for which the $\mathcal{H}$-harmonic Bergman projection $P_\alpha$ is bounded on $L^p(\omega d\nu_\alpha)$?
\vspace{0.1in}

Observe that the $\mathcal{H}$-harmonic Bergman projection on $\B_2$ coincides with the classical harmonic Bergman projection on the unit disk.
Although the results of Bekoll\'e-Bonami and Pott-Reguera were initially established for the holomorphic Bergman projection, their results extend naturally to the harmonic setting. Consequently, by the work of Bekoll\'e-Bonami and Pott-Reguera, the one-weight inequality for the $\mathcal{H}$-harmonic Bergman projection on $\B_2$ is characterized by the Bekoll\'e-Bonami class of weights. In the case $n\geqslant 3$, it is therefore natural to conjecture that the Bekoll\'e-Bonami weights continue to ensure the boundedness of the $\mathcal{H}$-harmonic Bergman projection on $L^p(\omega d\nu_\alpha)$. In this note, we shall confirm this folk conjecture.

\vspace{0.1in}
For a set $E$ in the unit sphere $\partial\B_n=\{x\in\mathbb{R}^n:|x|=1\}$,
a Carleson box $\widehat{E}$ in the unit ball associated with the set $E$ is defined  as
\[
	\widehat{E}:=\left\{z\in\B_n:\frac{z}{|z|}\in E,1-r<|z|<1\right\},
\]
where $r=\textup{diam}(E)/2$ and $\textup{diam}(E)=\max\{|x-y|:x,y\in E\}$.
\vspace{0.1in}

Let $1<p<\infty$ and $-1<\alpha<\infty$, a weight $\omega$ on $\B_n$ is said to be a Bekoll\'e-Bonami $B_{p,\alpha}$ weight if 
the Bekoll\'e-Bonami constant $[\omega]_{p,\alpha}$ is finite. Here $[\omega]_{p,\alpha}$ is defined as
 \begin{equation*}
[\omega]_{p,\alpha}:=\sup\limits_{B\subset\partial \B_n}\frac{|\widehat{B}|_{\omega,\alpha}}{|\widehat{B}|_\alpha}\left(\frac{|\widehat{B}|_{\omega^{1-p'},\alpha}}{|\widehat{B}|_\alpha}\right)^{p-1},
\end{equation*}
where 
$\frac{1}{p}+\frac{1}{p'}=1$, $|\widehat{B}|_{\alpha}=\nu_\alpha(\widehat{B})$, $|\widehat{B}|_{\omega,\alpha}=\int_{\widehat{B}}\omega d\nu_\alpha$ and that the supremum runs over all Euclidean balls $B\subset\partial \B_n$. 

\vspace{0.1in}

Our main result is stated as follows
\begin{theorem}\label{Thm-key}
	Let $1<p<\infty$ and $-1<\alpha<\infty$. If $\omega$ is a $ B_{p,\alpha}$ weight, then the $\mathcal{H}$-harmonic Bergman projection 
	$
	P_\alpha $ is bounded on  $L^p(\omega d\nu_\alpha)$. More precisely, there exists a constant $C=C(n,p,\alpha)$ such that
	 \begin{equation*}
	 ||P_\alpha||_{ L^p(\omega d\nu_\alpha)\to L^p(\omega d\nu_\alpha)}\leqslant C  [\omega]_{p,\alpha}^{\max\left\{1,\frac{1}{p-1}\right\}}.
	\end{equation*}
\end{theorem}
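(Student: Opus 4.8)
The plan is to adapt the dyadic (sparse) method that Pott and Reguera \cite{PR2013} developed for the holomorphic Bergman projection. The scheme has three layers: a pointwise size estimate for the reproducing kernel $\mathcal R_\alpha$, a reduction of $P_\alpha$ to a \emph{positive dyadic Bergman operator} built from Carleson boxes, and a sharp weighted bound for that positive operator expressed through $[\omega]_{p,\alpha}$. Throughout I abbreviate the weighted average by $\langle f\rangle_{\widehat Q}=|\widehat Q|_\alpha^{-1}\int_{\widehat Q}f\,d\nu_\alpha$.

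The first and decisive step is to prove the kernel size estimate
\[
|\mathcal R_\alpha(x,y)|\leqslant \frac{C}{[x,y]^{\,n+\alpha}},\qquad x,y\in\B_n .
\]
This exponent is forced by the geometry: since $[x,x]=1-|x|^2$ and $|\widehat B|_\alpha\sim\operatorname{diam}(B)^{\,n+\alpha}$ for surface balls $B$, the right-hand side matches the on-diagonal growth $\mathcal R_\alpha(x,x)\sim(1-|x|^2)^{-(n+\alpha)}$ dictated by the reproducing property. I expect this to be the main obstacle. Unlike the holomorphic Bergman kernel, which is an explicit negative power of $1-\langle z,w\rangle$, the $\mathcal H$-harmonic kernel for $n\geqslant 3$ is not elementary, so the estimate must be extracted from the series/integral representation of $\mathcal R_\alpha$ (as in the analysis of \cite{BEY,Ureyen2023}), with care taken uniformly up to the boundary and across the diagonal. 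This is exactly the ``discretization of the Bergman kernel'' advertised in the abstract.

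Granting the size estimate, I would next fix a Bergman tree, that is, a tiling of $\B_n$ by Whitney boxes $\{Q\}$ with $\operatorname{diam}(Q)\sim 1-|x|$ for $x\in Q$ and $|\widehat Q|_\alpha\sim |Q|_\alpha\sim\operatorname{diam}(Q)^{\,n+\alpha}$, where $\widehat Q$ is the Carleson box over $Q$; using finitely many adjacent dyadic systems on the sphere $\partial\B_n$ one may assume every Carleson box is comparable to some $\widehat Q$ in one of the systems. Writing the integral defining $P_\alpha$ as a sum over the annuli $[x,y]\sim\operatorname{diam}(Q)$ and invoking the size estimate collapses each annular contribution to a weighted average, so that
\[
|P_\alpha f(x)|\leqslant\int_{\B_n}|\mathcal R_\alpha(x,y)|\,|f(y)|\,d\nu_\alpha(y)\lesssim \sum_{Q}\langle |f|\rangle_{\widehat Q}\,\mathbbm{1}_{\widehat Q}(x).
\]
A stopping-time selection then replaces the full family by a sparse subfamily $\mathcal S$, producing the sparse domination $|P_\alpha f|\lesssim \sum_{Q\in\mathcal S}\langle|f|\rangle_{\widehat Q}\,\mathbbm{1}_{\widehat Q}=:\mathcal A_{\mathcal S}|f|$; the sparseness is what keeps $\mathcal A_{\mathcal S}$ bounded on $L^p(\omega\, d\nu_\alpha)$ despite the logarithmic blow-up of the positive kernel on the diagonal.

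It then remains to bound the positive/sparse Bergman operator sharply,
\[
\|\mathcal A_{\mathcal S}\|_{L^p(\omega\, d\nu_\alpha)\to L^p(\omega\, d\nu_\alpha)}\leqslant C\,[\omega]_{p,\alpha}^{\max\{1,\,1/(p-1)\}},
\]
uniformly over sparse families $\mathcal S$ of Carleson boxes. Because $\mathcal A_{\mathcal S}$ is self-adjoint with respect to $\nu_\alpha$ and $[\omega^{1-p'}]_{p',\alpha}=[\omega]_{p,\alpha}^{1/(p-1)}$, duality reduces matters to $p\geqslant 2$, where one targets the power $[\omega]_{p,\alpha}^{1}$; the range $1<p<2$ then yields the power $[\omega]_{p,\alpha}^{1/(p-1)}$, which is the origin of the $\max$. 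For $p\geqslant 2$ I would pair $\mathcal A_{\mathcal S}f$ against $g\in L^{p'}(\omega\, d\nu_\alpha)$, introduce $\sigma=\omega^{1-p'}$ through $\langle|f|\rangle_{\widehat Q}=\langle |f|\sigma^{-1}\,\sigma\rangle_{\widehat Q}$, and estimate the resulting bilinear sum over $\mathcal S$ by a Carleson embedding theorem whose Carleson constant is controlled by $[\omega]_{p,\alpha}$. In this argument the $B_{p,\alpha}$ testing condition over Carleson boxes plays exactly the role the $A_p$ condition plays for the classical sparse operator, and the stopping-box bookkeeping supplies the stated power. Assembling the three layers---the kernel estimate, the sparse domination, and the sharp bound for $\mathcal A_{\mathcal S}$---gives the theorem, with the kernel size estimate for the $\mathcal H$-harmonic reproducing kernel being the genuinely new ingredient.
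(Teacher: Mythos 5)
Your outline follows the same skeleton as the paper's argument---kernel size estimate, Carleson-box discretization over finitely many adjacent dyadic systems on $(\partial\B_n,\rho)$, and a weighted bound for the resulting positive dyadic operator---and it is sound, but it diverges from the paper at two points worth flagging. First, the size estimate $|\mathcal R_\alpha(x,y)|\leqslant C[x,y]^{-(n+\alpha)}$, which you correctly identify as the indispensable input and propose to extract from the series representation, is not proved in the paper at all: it is quoted from \"Ureyen \cite[Theorem 1.2]{Ureyen2023}, so the paper's genuinely new work is the geometric packaging (Lemma \ref{[x,y]}: for any $x,y$ there is a surface ball $B_\rho(z,r)$ with $x,y\in\widehat B_\rho(z,r)$ and $[x,y]\gtrsim r$), not the kernel bound itself. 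Second, your stopping-time selection of a sparse subfamily is unnecessary here, and your worry about a ``logarithmic blow-up on the diagonal'' is unfounded: the positive kernel $\sum_Q|\widehat Q|_\alpha^{-1}\chi_{\widehat Q}(x)\chi_{\widehat Q}(y)$ sums geometrically (the boxes containing a fixed $x$ have $\nu_\alpha$-measures decaying like $96^{-k(n+\alpha)}$), and the \emph{full} family of Carleson boxes over a dyadic system is already sparse because the top slices $\widehat Q\setminus\widehat Q_{1/2}$ are pairwise disjoint and satisfy $|\widehat Q|_\alpha\leqslant C|\widehat Q\setminus\widehat Q_{1/2}|_\alpha$ (Lemmas \ref{disjoint} and \ref{doubling}); this disjointness is exactly what the paper uses in place of a sparse selection. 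Finally, for the weighted bound you propose a direct bilinear/Carleson-embedding argument for all $p\geqslant2$ plus duality for $p<2$; the paper instead proves only $p=2$ (Sawyer's duality trick reducing to the universal $L^2(\omega\,d\nu_\alpha)$-boundedness of the dyadic maximal operators $M^t_{\omega,\alpha}$) and then reaches $p>2$ by sharp Rubio de Francia extrapolation and $p<2$ by duality. Both finishing routes yield the exponent $\max\{1,1/(p-1)\}$; yours avoids extrapolation at the cost of a more delicate two-weight bilinear estimate, while the paper's keeps the direct weighted analysis confined to the cleanest case $p=2$.
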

\vspace{0.1in}

\begin{remark}
Our strategy of proving Theorem \ref{Thm-key} is inspired by Pott-Reguera's recent beautiful work on the sharp one-weight estimate for Bergman projection \cite{PR2013}. However, 
since there is no explicit closed form of the kernel of the $\mathcal{H}$-harmonic Bergman projection. To overcome this difficulty, we construct suitable dyadic positive operators on appropriately chosen Carleson box systems, which allow us to effectively dominate the original operator.
\end{remark}

The paper is organized as follows. In Section 2, based on certain dyadic Carleson boxes on the unit ball, we provide a discretization of the $\mathcal{H}$-harmonic Bergman projection. The proof of the
main result is presented in Section 3, which is divided into two parts. For the first part, in Subsection 3.1, we prove that case for $p=2$. While in Subsection 3.2 we deal with the case for $p\not=2$ by Rubio de Francia's extrapolation method. In the final section, by constructing specific examples, we demonstrate that the norm estimate in the one-weight inequality for the $\mathcal{H}$-harmonic Bergman projection is sharp.

\vspace{0.2in}

\section{Carleson systems and the discretization}
\vspace{0.1cm}
\subsection{The Carleson box on the real unit ball}
For two points $x$ and $y$ in $\partial\B_n$, let $\rho(x,y)$ denote the angle between them, defined by
\[
\rho(x,y)=\arccos(\langle x,y \rangle ).
\]

\begin{proposition}\label{metric-angle}
	The function $\rho(x,y)$ defines a metric on
	$\partial\B_n$ that is equivalent to the standard Euclidean distance on $\partial\B_n$.
\end{proposition}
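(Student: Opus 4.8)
The plan is to verify the three metric axioms and then establish a two-sided comparison with the chordal (Euclidean) distance by means of an explicit trigonometric identity. Since $x,y\in\partial\B_n$ are unit vectors, the Cauchy--Schwarz inequality gives $\langle x,y\rangle\in[-1,1]$, so $\rho(x,y)=\arccos\langle x,y\rangle$ is well defined and lies in $[0,\pi]$. Non-negativity is immediate, and $\rho(x,y)=0$ holds exactly when $\langle x,y\rangle=1$, which for unit vectors forces $|x-y|^2=2-2\langle x,y\rangle=0$, i.e.\ $x=y$. Symmetry follows at once from the symmetry of the inner product.

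The main point is the triangle inequality, which I would prove by an orthogonal decomposition relative to the middle vertex. Write $\alpha=\rho(x,y)$, $\beta=\rho(y,z)$, $\gamma=\rho(x,z)$. If $\alpha+\beta\geqslant\pi$ then $\gamma\leqslant\pi\leqslant\alpha+\beta$ and there is nothing to prove, so assume $\alpha+\beta<\pi$. Decompose $x=(\cos\alpha)\,y+u$ and $z=(\cos\beta)\,y+w$ with $u,w\perp y$; a direct computation gives $|u|=\sin\alpha$ and $|w|=\sin\beta$. Expanding $\langle x,z\rangle$ and using $\langle u,y\rangle=\langle w,y\rangle=0$ yields $\cos\gamma=\langle x,z\rangle=\cos\alpha\cos\beta+\langle u,w\rangle$. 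By Cauchy--Schwarz, $\langle u,w\rangle\geqslant-|u||w|=-\sin\alpha\sin\beta$, hence $\cos\gamma\geqslant\cos\alpha\cos\beta-\sin\alpha\sin\beta=\cos(\alpha+\beta)$. Since $\cos$ is strictly decreasing on $[0,\pi]$ and both $\gamma$ and $\alpha+\beta$ lie in that interval, this gives $\gamma\leqslant\alpha+\beta$, which is precisely the triangle inequality.

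For the equivalence with the Euclidean distance, I would start from the identity $|x-y|^2=2-2\langle x,y\rangle=2\bigl(1-\cos\rho(x,y)\bigr)=4\sin^2\!\bigl(\rho(x,y)/2\bigr)$, so that $|x-y|=2\sin\bigl(\rho(x,y)/2\bigr)$. Because $\rho(x,y)/2\in[0,\pi/2]$, Jordan's inequality $\tfrac{2}{\pi}t\leqslant\sin t\leqslant t$ on $[0,\pi/2]$ yields $\tfrac{2}{\pi}\,\rho(x,y)\leqslant|x-y|\leqslant\rho(x,y)$. This two-sided bound shows that $\rho$ and the Euclidean metric on $\partial\B_n$ are comparable up to absolute constants, which completes the proof.

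The only genuine obstacle is the triangle inequality; once the orthogonal decomposition about the middle vertex is in place, the argument reduces to Cauchy--Schwarz together with the monotonicity of $\cos$ on $[0,\pi]$. I expect no real difficulty in the equivalence step beyond invoking the half-angle identity and Jordan's inequality.
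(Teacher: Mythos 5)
Your proof is correct, and the equivalence-with-the-Euclidean-metric part (the half-angle identity $|x-y|=2\sin(\rho(x,y)/2)$ plus Jordan's inequality) is exactly what the paper does. For the triangle inequality the underlying estimate is the same in both arguments, namely $\cos\rho(x,z)\geqslant\cos\rho(x,y)\cos\rho(y,z)-\sin\rho(x,y)\sin\rho(y,z)$ via Cauchy--Schwarz, but you reach it by a different device: you decompose $x$ and $z$ orthogonally relative to the middle vertex $y$ and apply Cauchy--Schwarz to the orthogonal components $u,w$, whereas the paper invokes the Binet--Cauchy identity, writes the remainder term as $\sum_{i<j}(x_iz_j-x_jz_i)(y_iz_j-y_jz_i)$, and applies Cauchy--Schwarz to those wedge-product coordinates (identifying $\sum_{i<j}|x_iz_j-x_jz_i|^2=\sin^2\rho(x,z)$ by a second use of Binet--Cauchy). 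The two remainder terms are literally equal, so this is a repackaging rather than a new idea, but your version is coordinate-free and avoids citing the identity; it also handles the degenerate case $\rho(x,y)+\rho(y,z)\geqslant\pi$ explicitly before using monotonicity of $\cos$ on $[0,\pi]$, a point the paper glosses over when it asserts the reduction to the cosine inequality as an ``if and only if.'' Either route is fine.
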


\begin{proof}
To verify that $\rho(\cdot,\cdot)$ is a metric on $\partial\B_n$,  it suffices to establish the triangle inequality:  for any $x,y,z\in\partial\B_n$, 
	$$
	\rho(x,y)\leqslant\rho(x,z)+\rho(z,y).
	$$
	By the definition of the function $\rho(\cdot,\cdot)$,  the triangle inequality holds if and only if 
	\[
	\cos\rho(x,y)\geqslant\cos(\rho(x,z)+\rho(z,y)).
	\]
	Note that $|x|=|y|=|z|=1$, by the Binet-Cauchy identity,
	\[
	\langle x, y \rangle=\langle x, z \rangle\langle z, y \rangle+\sum\limits_{1\leqslant i< j\leqslant n}(x_iz_j-x_jz_i)(y_iz_j-y_jz_i),
	\]
	we have
	\begin{equation*}
		\begin{aligned}
			\cos\rho(x,y)&=\langle x, z \rangle\langle z, y \rangle+\sum\limits_{1\leqslant i< j\leqslant n}(x_iz_j-x_jz_i)(y_iz_j-y_jz_i)\\
			&=\cos\rho(x,z)\cos\rho(z,y)+\sum\limits_{1\leqslant i< j\leqslant n}(x_iz_j-x_jz_i)(y_iz_j-y_jz_i)\\
			&\geqslant\cos\rho(x,z)\cos\rho(z,y)-\left(\sum\limits_{1\leqslant i< j\leqslant n}|x_iz_j-x_jz_i|^2\right)^{\frac{1}{2}}\left(\sum\limits_{1\leqslant i< j\leqslant n}|y_iz_j-y_jz_i|^2\right)^{\frac{1}{2}}.\\
		\end{aligned}
	\end{equation*}
	By using the Binet-Cauchy identity again, we get
	\[
	\sum\limits_{1\leqslant i< j\leqslant n}|x_iz_j-x_jz_i|^2=|x|^2|z|^2-|\langle x, z \rangle|^2=\sin^2\rho(x,z).
	\]
It follows that
	\begin{equation*}
		\begin{aligned}
			\cos\rho(x,y)\geqslant\cos\rho(x,z)\cos\rho(z,y)-\sin\rho(x,z)\sin\rho(z,y)=\cos(\rho(x,z)+\rho(z,y)).
		\end{aligned}
	\end{equation*}
	
Next, we show that the metric $\rho(\cdot,\cdot)$ on $\partial\B_n$  is equivalent to the Euclidean distance.
For any two points $x,y\in \partial\B_n$, we observe that
	\begin{equation*}
		\begin{aligned}
			|x-y|^2
			&=|x|^2+|y|^2-2\langle x,y \rangle\\
			&=2(1-\cos\rho(x,y))\\
			&=4\left(\sin\frac{\rho(x,y)}{2}\right)^2.
		\end{aligned}
	\end{equation*}
	Note that for any $0\leqslant\theta\leqslant\pi/2$, we have
	\[
		\frac{2}{\pi}\theta\leqslant\sin\theta\leqslant\theta;
	\]
	and for any $x,y\in\partial\B_n$, we have
	\[
		0\leqslant\rho(x,y)\leqslant\pi.
	\]
	Hence,
	\begin{equation}\label{metric-equi}
		\frac{2}{\pi}\rho(x,y)\leqslant |x-y|\leqslant\rho(x,y).
	\end{equation}
	This completes the proof.	
\end{proof}

\vspace{0.1in}
Let $x\in\partial\B_n$ and $0<r<1$. We consider a ball $B_\rho(x,r)$ on the unt sphere $\partial\B_n$ that is defined by
\[
	B_\rho(x,r)=\{y\in\partial\B_n:\rho(x,y)<r\}
\]
and its associated Carleson box $\widehat{B}_\rho(x,r)$ on the unit ball
\begin{equation*}
	\widehat{B}_\rho(x,r):=\left\{z\in\B_n:\frac{z}{|z|}\in B_\rho(x,r),1-r<|z|<1\right\}.
\end{equation*}

By Proposition \ref{metric-angle}, the Bekoll\'e-Bonami constant $[w]_{p,\alpha}$ is equivalent, up to a constant,  to the following quantity:
	\begin{equation}\label{new-bp}
	\sup\limits_{B_\rho\subset\partial \B_n}\frac{|\widehat{B}_\rho|_{\omega,\alpha}}{|\widehat{B}_\rho|_\alpha}\left(\frac{|\widehat{B}_\rho|_{\omega^{1-p'},\alpha}}{|\widehat{B}_\rho|_\alpha}\right)^{p-1}<\infty,
	\end{equation}
	where $\frac{1}{p}+\frac{1}{p'}=1$ and the supremum runs over all balls $B_\rho$ satisfying $B_\rho(x,r)\subset \partial \B_n$.

\begin{lemma}\label{area-ball}
Let $n\geqslant 3$ and $0<r<1$. For a ball $B_\rho(x,r)\subset\partial\B_n$, there exists positive constants $C_1=C_1(n,\alpha)$ and $C_2=C_2(n)$ such that
	\begin{equation}\label{bi-ball-area}
	 C_1\frac{c_\alpha}{\alpha+1}r^{n+\alpha}(2-r)^{\alpha+1}\leqslant |\widehat{B}_\rho(x,r)|_\alpha \leqslant C_2\frac{c_\alpha}{\alpha+1}r^{n+\alpha}(2-r)^{\alpha+1}.
	\end{equation}
\end{lemma}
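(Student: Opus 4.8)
The plan is to pass to polar coordinates and factor $|\widehat{B}_\rho(x,r)|_\alpha$ into the product of the surface measure of the spherical cap $B_\rho(x,r)$ and a one-dimensional radial integral, and then to estimate the two factors separately. Writing $z=s\xi$ with $s=|z|\in(0,1)$ and $\xi=z/|z|\in\partial\B_n$, and recalling that $d\nu=V(\B_n)^{-1}s^{n-1}\,ds\,d\sigma(\xi)$, where $\sigma$ denotes the surface measure on $\partial\B_n$ and $V(\B_n)$ the Euclidean volume of $\B_n$, the two defining conditions $\xi\in B_\rho(x,r)$ and $1-r<s<1$ decouple, so that
\[
|\widehat{B}_\rho(x,r)|_\alpha=\frac{c_\alpha}{V(\B_n)}\,\sigma\big(B_\rho(x,r)\big)\int_{1-r}^{1}(1-s^2)^\alpha s^{n-1}\,ds.
\]

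Since $\rho$ is precisely the geodesic distance on $\partial\B_n$, the set $B_\rho(x,r)$ is a geodesic cap of radius $r$, and the standard cap-area formula gives $\sigma\big(B_\rho(x,r)\big)=\kappa_{n-2}\int_0^{r}\sin^{n-2}\theta\,d\theta$, where $\kappa_{n-2}$ is the surface area of the unit $(n-2)$-dimensional sphere (this is where $n\geq 3$ enters). Because $0<r<1<\pi/2$, the same elementary bounds $\tfrac{2}{\pi}\theta\leq\sin\theta\leq\theta$ on $(0,\pi/2)$ used in the proof of Proposition \ref{metric-angle} yield $\sigma\big(B_\rho(x,r)\big)\asymp_n r^{n-1}$, with explicit constants $\kappa_{n-2}(2/\pi)^{n-2}/(n-1)$ and $\kappa_{n-2}/(n-1)$ depending only on $n$.

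The radial factor is the crux. Substituting $u=1-s^2$ transforms it into
\[
\int_{1-r}^{1}(1-s^2)^\alpha s^{n-1}\,ds=\frac{1}{2}\int_0^{a}u^\alpha(1-u)^{\frac{n-2}{2}}\,du,\qquad a:=1-(1-r)^2=r(2-r).
\]
For the upper bound I would simply use $(1-u)^{(n-2)/2}\leq 1$ (valid since $n\geq 2$), giving $\tfrac12\,a^{\alpha+1}/(\alpha+1)$; this estimate is clean and uniform in $\alpha$, which is exactly why the upper constant $C_2$ may be taken to depend on $n$ only. The lower bound is the main obstacle: bounding $(1-u)^{(n-2)/2}$ below on the whole interval $(0,a)$ by its value at $u=a$ would insert the spurious factor $(1-a)^{(n-2)/2}=(1-r)^{n-2}$, which is absent from \eqref{bi-ball-area} and degenerates as $r\to 1$. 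To avoid this, I would integrate only over $u\in(0,\beta)$ with $\beta:=\min(a,1/2)$; there $1-u\geq 1/2$, and since $\beta\geq a/2$ in both cases one obtains $\tfrac12(1/2)^{(n-2)/2}\beta^{\alpha+1}/(\alpha+1)\geq c(n,\alpha)\,a^{\alpha+1}/(\alpha+1)$, where the constant $c(n,\alpha)$ carries a factor $2^{-(\alpha+1)}$ coming from $\beta^{\alpha+1}\geq 2^{-(\alpha+1)}a^{\alpha+1}$; this $\alpha$-dependence is precisely what forces $C_1$ to depend on $\alpha$. Multiplying the two factors and using $r^{n-1}a^{\alpha+1}=r^{n-1}[r(2-r)]^{\alpha+1}=r^{n+\alpha}(2-r)^{\alpha+1}$ then yields the two-sided bound \eqref{bi-ball-area}.
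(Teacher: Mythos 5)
Your proposal is correct and follows essentially the same route as the paper: factor the Carleson box in polar coordinates, show the cap measure is comparable to $r^{n-1}$ with constants depending only on $n$, and reduce the radial integral to $\int t(1-t^2)^\alpha\,dt$ at the cost of a bounded factor (your substitution $u=1-s^2$ with truncation at $\min(a,1/2)$ is just a reparametrization of the paper's case split at $r=1/2$). Both arguments correctly isolate where the $\alpha$-dependence of the lower constant $C_1$ arises, so no issues.
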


\begin{proof}
Given a ball $B_\rho (x,r)\subset\partial\B_n$, by the inequality \eqref{metric-equi}, 
\[
	B(x,2r/\pi)\subset B_\rho(x,r)\subset B(x,r).
\]
Here, $B(x,r) = \{y \in \partial\B_n : |x - y| < r\}$ denotes the standard Euclidean ball centered at $x$ with radius $r$.
Therefore, there exists positive constants $c_1=c_1(n)$ and $c_2=c_2(n)$ such that 
\[
	c_1 r^{n-1}\leqslant \int_{B_\rho(x,r)} \sigma(\xi)\leqslant c_2 r^{n-1},
\]
where $d\sigma$ is normalized sphere area measure.
Hence,
\begin{equation*}
	\begin{aligned}
		|\widehat{B}_\rho(x,r)|_\alpha&= c_\alpha\int_{\widehat{B}_\rho(x,r)}(1-|z|^2)^\alpha d\nu(z)\\
		&=n c_\alpha \int_{B_\rho(x,r)} d\sigma(\xi)\int_{1-r}^1t^{n-1}(1-t^2)^\alpha dt\\
		&\leqslant nc_2 c_\alpha r^{n-1}\int_{1-r}^1t^{n-1}(1-t^2)^\alpha dt.
	\end{aligned}
\end{equation*}
Since $1-r<t<1$ and $n\geqslant 3$, let $C_2=n c_2$, we have 
\begin{equation*}
	\begin{aligned}
	|\widehat{B}_\rho(x,r)|_\alpha&\leqslant C_2 c_\alpha r^{n-1}\int_{1-r}^1t(1-t^2)^\alpha dt\\
		&=C_2 \frac{c_\alpha}{\alpha+1}r^{n+\alpha}(2-r)^{\alpha+1}.
	\end{aligned}
\end{equation*}
This establishes the upper bound in \eqref{bi-ball-area}. 
\vspace{0.1in}

Next, we estimate the lower bound. For $0<r\leqslant1/2,$ we have 
\[
	\frac{1}{2}\leqslant 1-r<t<1.
\]
Therefore,
\begin{equation*}
	\begin{aligned}
	|\widehat{B}_\rho(x,r)|_\alpha
	&\geqslant n c_1 c_\alpha r^{n-1}\int_{1-r}^1t^{n-1}(1-t^2)^\alpha dt\\
	&\geqslant \frac{nc_1}{2^{n-2}}c_\alpha r^{n-1}\int_{1-r}^1t(1-t^2)^\alpha dt\\
	&\geqslant \frac{nc_1}{2^{n-2}}\frac{c_\alpha}{\alpha+1}r^{n+\alpha}(2-r)^{\alpha+1}.
	\end{aligned}
\end{equation*}
For $1/2\leqslant r<1,$ we obtain
\begin{equation*}
	\begin{aligned}
		|\widehat{B}_\rho(x,r)|_\alpha&\geqslant n c_1 c_\alpha r^{n-1}\int_{\frac{1}{2}}^1t^{n-1}(1-t^2)^\alpha dt\\
		&\geqslant\frac{nc_1}{2^{n-2}}c_\alpha r^{n-1}\int_{\frac{1}{2}}^1t(1-t^2)^\alpha dt\\
		&=\frac{nc_1}{2^{n-2}}\left(\frac{3}{4}\right)^{\alpha+1}\frac{c_\alpha}{\alpha+1}r^{n-1}.
	\end{aligned}
\end{equation*}
Since $0<2r-r^2<1$, we have
\begin{equation*}
	\begin{aligned}
		|\widehat{B}_\rho(x,r)|_\alpha
		&\geqslant \frac{nc_1}{2^{n-2}}\left(\frac{3}{4}\right)^{\alpha+1}\frac{c_\alpha}{\alpha+1}r^{n-1}(2r-r^2)^{\alpha+1}\\
		&= \frac{nc_1}{2^{n-2}}\left(\frac{3}{4}\right)^{\alpha+1}\frac{c_\alpha}{\alpha+1}r^{n+\alpha}(2-r)^{\alpha+1}.
	\end{aligned}
\end{equation*}
Let 
\begin{equation*}
	\begin{aligned}
		C_1&=\min\left\{\frac{nc_1}{2^{n-2}}, \frac{nc_1}{2^{n-2}}\left(\frac{3}{4}\right)^{\alpha+1}\right\}\\
		&=\frac{nc_1}{2^{n-2}}\left(\frac{3}{4}\right)^{\alpha+1}.
	\end{aligned}
\end{equation*}
Then, for any $x\in\partial\B_n$ and $0<r<1$,
\[
	|\widehat{B}_\rho(x,r)|_\alpha\geqslant C_1 \frac{c_\alpha}{\alpha+1}r^{n+\alpha}(2-r)^{\alpha+1}.
\]
This is the lower bound in \eqref{bi-ball-area} and we complete the whole proof.
\end{proof}
\subsection{An example of $B_{2,\alpha}$}\label{S:sharp-example}
By using Lemma \ref{area-ball}, in this subsection, we obtain a precise estimate for the Bekoll\'e-Bonami constant $[\omega]_{2,\alpha}$  for 
\[
\omega(x)=(1-|x|^2)^s, \quad x\in \B_n,
\]
where  $-1<\alpha<\infty$, $0<\delta<1$ and $s=(\alpha+1)(1-\delta)$. Such examples play an important role in the sharp estimate for the norm of $P_\alpha$ in Section \ref{S:sharp}.

Indeed, for any $x\in\partial\B_n$ and $0<r<1$, consider the ball $B_\rho(x,r)$, we have
\[
	|\widehat{B}_\rho(x,r)|_{\omega,\alpha}=\frac{c_\alpha}{c_{\alpha+s}}|\widehat{B}_\rho(x,r)|_{\alpha+s} \quad \text{ and } \quad
	|\widehat{B}_\rho(x,r)|_{\omega^{-1},\alpha}=\frac{c_\alpha}{c_{\alpha-s}}|\widehat{B}_\rho(x,r)|_{\alpha-s}.
\]
Observe that 
\[
	\alpha+s=(\alpha+1)(2-\delta)-1>-1 \quad \text{ and } \quad \alpha-s=(\alpha+1)\delta-1>-1,
\]
and the upper estimate in \eqref{bi-ball-area} holds for any $-1<\alpha<\infty$, we have
	\[
		|\widehat{B}_\rho(x,r)|_{\omega,\alpha}\leqslant C_2\frac{c_\alpha}{s+\alpha+1}r^{n+s+\alpha}(2-r)^{s+\alpha+1},
	\]
	and
	\[
		|\widehat{B}_\rho(x,r)|_{\omega^{-1},\alpha}\leqslant C_2\frac{c_\alpha}{-s+\alpha+1}r^{n-s+\alpha}(2-r)^{-s+\alpha+1}.
	\]
Besides, by the lower bound in \eqref{area-ball},
\[
|\widehat{B}_\rho(x,r)|_{\alpha}\geqslant C_1 \frac{c_\alpha}{\alpha+1}r^{n+\alpha}(2-r)^{\alpha+1}. 
\]
Note also that we can choose $C_1,C_2$ are independent of $\delta$. Hence
\begin{equation}\label{example-bp}
	\begin{aligned}
		\sup\limits_{B_\rho(x,r)\subset \partial \B_n}\frac{|\widehat{B}_\rho(x,r)|_{\omega,\alpha}|\widehat{B}_\rho(x,r)|_{\omega^{-1},\alpha}}{|\widehat{B}_\rho(x,r)|_\alpha^2}
		&\leqslant \left(\frac{C_2}{C_1}\right)^2\frac{1}{(s+\alpha+1)(-s+\alpha+1)}\\
		&\leqslant \left(\frac{C_2}{C_1(\alpha+1)}\right)^2\frac{1}{\delta}.
	\end{aligned}
\end{equation}
It follows from \eqref{new-bp} and \eqref{example-bp} that there is a constant $c_3=c_3(n,\alpha)$ such that
\begin{equation}\label{example-weight}
	[\omega]_{2,\alpha}\leqslant\frac{c_3}{\delta}.
\end{equation}

\vspace{0.1cm}
\subsection{The Carleson box system on the real unit ball}
To discretize the $\mathcal{H}$-harmonic Bergman projection $P_\alpha$, we need dyadic systems on $(\partial\B_n,\rho)$.
A collection of sets 
\[
	\mathscr{D}=\{Q_{k,i}\subset\partial\B_n: 1\leqslant k<\infty,1\leqslant i\leqslant M(k)\}
\]
is called a dyadic system on $(\partial\B_n,\rho)$ if 
\begin{enumerate}[{(i)}]
	\item 
	for each integer $k\geqslant 1$, $\{Q_{k,i}\}_{i=1}^{M(k)}$ is a disjoint covering of $\partial\B_n$, i.e.,
	\begin{itemize}
		\item $\partial\B_n=\bigcup\limits_{i=1}^{M(k)} Q_{k,i}$ and 
		\item for all $1\leqslant i\neq j\leqslant M(k), Q_{k,i}\cap Q_{k,j}=\emptyset $.
	\end{itemize}
	\item if $1\leqslant k\leqslant l$, then either $Q_{l,j}\subset Q_{k,i}$ or $Q_{k,i}\cap Q_{l,j}=\emptyset$.
\end{enumerate}
\vspace{0.1in}
Furthermore, we say the dyadic system $\mathscr{D}$ is associated to a triple of positive constants $(\eta,\kappa_0,\kappa_1)$ if there exists a point set 
\[
	\mathcal{P}=\{x_{k,i}\in\partial\B_n:1\leqslant k<\infty,1\leqslant i\leqslant M(k)\}
\] 
such that 
\begin{equation}\label{dyadic-to-ball}
	B_\rho(x_{k,i},\kappa_0\eta^k)\subset Q_{k,i}\subset B_\rho(x_{k,i},\kappa_1\eta^k),
\end{equation}
for each $k\geqslant 1, 1\leqslant i\leqslant M(k)$.

\vspace{0.1in}
By Hytonen and Kairema's reuslt (\cite[Theorem 4.1]{Hytonen2012}), we have the following result on metric space $(\partial\B_n,\rho)$:

\begin{proposition}\label{dyadic-thm}
There is a finite collection of dyadic systems $\{\mathscr{D}_{t}\}_{t=1}^{N}$ associated to the triple $(1/96,1/12,4)$ on $(\partial\B_n,\rho)$ such that for any ball $B_\rho \subset \partial\B$, there exists $1\leqslant t\leqslant N$ and a cube $Q\in\mathscr{D}_{t}$ with
\begin{equation*}
	B_\rho \subset Q \quad \mbox{and}\quad \textup{diam}_\rho (Q)\leqslant C_3\textup{diam}_\rho(B_\rho),
\end{equation*}
where $\textup{diam}_\rho(Q)=\max\{\rho(x,y):x,y\in Q\}$ and the constants $N$ and $C_3$ only depend on the dimension $n$.
\end{proposition}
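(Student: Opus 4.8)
The plan is to recognize the statement as a direct instantiation of the Hyt\"onen--Kairema construction of adjacent dyadic systems \cite[Theorem 4.1]{Hytonen2012}, whose only structural hypothesis on the underlying metric space is that it be geometrically doubling. Thus the whole argument reduces to two steps: first, verifying that $(\partial\B_n,\rho)$ is geometrically doubling with a constant depending only on $n$; and second, checking that the prescribed triple $(1/96,1/12,4)$ falls within the admissible parameter range of that construction, so that the cited theorem applies verbatim and yields a family whose cardinality $N$ and whose comparison constant $C_3$ are controlled by the doubling constant alone, hence by $n$.

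For the first step I would invoke Proposition \ref{metric-angle}, and more precisely the two-sided bound \eqref{metric-equi},
\[
\frac{2}{\pi}\rho(x,y)\leqslant|x-y|\leqslant\rho(x,y),\qquad x,y\in\partial\B_n,
\]
which exhibits $\rho$ as bi-Lipschitz equivalent to the Euclidean metric on $\partial\B_n$ with absolute constants. Since $\partial\B_n$ is a bounded subset of $\R^n$, it is geometrically doubling in the Euclidean metric: any Euclidean ball of radius $t$ can be covered by at most $C(n)$ Euclidean balls of radius $t/2$, with $C(n)$ depending only on $n$. Geometric doubling is preserved under a bi-Lipschitz change of metric, the doubling constant changing only by a factor controlled by the bi-Lipschitz constants (here the universal factors $2/\pi$ and $1$). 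Consequently $(\partial\B_n,\rho)$ is geometrically doubling with a constant depending only on $n$.

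For the second step I would feed this doubling property into \cite[Theorem 4.1]{Hytonen2012}. With scaling parameter $\eta=1/96$ and inner/outer radius constants $\kappa_0=1/12$, $\kappa_1=4$ --- a choice for which the smallness and nesting requirements of the construction are comfortably met, since for instance $\kappa_1\eta=1/24<1/12=\kappa_0$ ensures that a cube at scale $\eta^{k+1}$ fits inside its parent at scale $\eta^{k}$ --- the theorem produces a finite collection $\{\mathscr{D}_t\}_{t=1}^N$ of dyadic systems satisfying the two-sided ball inclusion \eqref{dyadic-to-ball}, together with the covering property that every ball $B_\rho\subset\partial\B_n$ is contained in some $Q\in\mathscr{D}_t$ with $\textup{diam}_\rho(Q)\leqslant C_3\,\textup{diam}_\rho(B_\rho)$. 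Tracking the dependence of the constants in that construction, both $N$ and $C_3$ are determined by the geometric doubling constant and the fixed triple, hence by $n$ alone.

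Because the argument is an application of an existing theorem, I do not expect a serious obstacle beyond bookkeeping; the only point demanding genuine care is confirming that the numerical triple $(1/96,1/12,4)$ satisfies the constraints built into the Hyt\"onen--Kairema construction, so that its conclusions --- including the uniform control of $N$ and $C_3$ by the doubling constant --- may be quoted without modification.
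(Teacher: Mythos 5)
Your proposal is correct and follows essentially the same route as the paper, which states Proposition \ref{dyadic-thm} as a direct consequence of \cite[Theorem 4.1]{Hytonen2012} without further argument. Your additional verification that $(\partial\B_n,\rho)$ is geometrically doubling via the bi-Lipschitz equivalence \eqref{metric-equi}, and that the triple $(1/96,1/12,4)$ meets the constraint $\kappa_1\eta\leqslant\kappa_0$, is exactly the bookkeeping the paper leaves implicit.
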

\begin{remark}
For the case $n=2$, the corresponding dyadic systems in Proposition \ref{dyadic-thm} are two shifted systems of the standard dyadic system on the unit circle. One can consult Mei's elegent arguments in \cite{Mei}.
\end{remark}
For $1\leqslant t\leqslant N$ and a cube $Q\in\mathscr{D}_t$, the Carleson box associated to $Q$ is defined as:
\[
\widehat{Q}:=\left\{z\in\B_n:\frac{z}{|z|}\in Q,1-\frac{1}{2}\textup{diam}_\rho(Q)<|z|<1\right\},
\]
then the dyadic system $\mathscr{D}_t$ induces a Carleson box system $\mathscr{Q}_t$ in the unit ball by
\[
\mathscr{Q}_t=\{\widehat{Q}:Q\in\mathcal{D}_t\}.
\]
In addition, let $0<\varepsilon<1$, we define the $\varepsilon$-Carleson box associated to $Q$ as
\[
\widehat{Q}_\varepsilon=\left\{z\in\B_n:\frac{z}{|z|}\in Q,1-\frac{\varepsilon}{2}\textup{diam}_\rho(Q)<|z|<1\right\}.
\]

\begin{lemma}\label{disjoint}
Let $1\leqslant t\leqslant N$ and $k\geqslant 1$. If there exist cubes $Q_{k+1,j},Q_{k,i}\in\mathscr{D}_t$ with $Q_{k+1,j}\subset Q_{k,i}$, then
	\[
		\widehat{Q}_{k+1,j}\subset\left(\widehat{Q}_{k,i}\right)_{1/2}.
	\]
\end{lemma}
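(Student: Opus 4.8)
The plan is to test membership directly: I fix an arbitrary $z\in\widehat{Q}_{k+1,j}$ and verify the two conditions defining $\left(\widehat{Q}_{k,i}\right)_{1/2}$. Abbreviate $D_k:=\textup{diam}_\rho(Q_{k,i})$ and $D_{k+1}:=\textup{diam}_\rho(Q_{k+1,j})$, so that $z\in\left(\widehat{Q}_{k,i}\right)_{1/2}$ means $z/|z|\in Q_{k,i}$ and $1-\tfrac14 D_k<|z|<1$. The angular condition is free: since $z/|z|\in Q_{k+1,j}\subset Q_{k,i}$ by hypothesis, we already have $z/|z|\in Q_{k,i}$. For the radial condition, $|z|<1$ is automatic, and because $z\in\widehat{Q}_{k+1,j}$ gives $|z|>1-\tfrac12 D_{k+1}$, the whole lemma reduces to the single diameter inequality
\[
D_{k+1}\leqslant\tfrac12 D_k,
\]
which forces $1-\tfrac12 D_{k+1}\geqslant 1-\tfrac14 D_k$ and hence $|z|>1-\tfrac14 D_k$.

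It remains to establish $D_{k+1}\leqslant\tfrac12 D_k$ from the sandwich relation \eqref{dyadic-to-ball}, and this is the only point at which the constants $(\eta,\kappa_0,\kappa_1)=(1/96,1/12,4)$ are used. The upper bound is routine: from $Q_{k+1,j}\subset B_\rho(x_{k+1,j},\kappa_1\eta^{k+1})$ and the triangle inequality for $\rho$, any two points of $Q_{k+1,j}$ lie within $\rho$-distance $2\kappa_1\eta^{k+1}$, so $D_{k+1}\leqslant 2\kappa_1\eta^{k+1}$. For the matching lower bound I would use $B_\rho(x_{k,i},\kappa_0\eta^k)\subset Q_{k,i}$ to get $D_k\geqslant 2\kappa_0\eta^k$. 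Since the chosen constants satisfy $2\kappa_1\eta=2\cdot4\cdot\tfrac{1}{96}=\tfrac{1}{12}=\kappa_0$, these two estimates combine to
\[
D_{k+1}\leqslant 2\kappa_1\eta^{k+1}=\kappa_0\eta^k\leqslant\tfrac12 D_k,
\]
which is exactly the required inequality.

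The genuinely geometric step, and the one I expect to be the crux, is the lower diameter bound $D_k\geqslant 2\kappa_0\eta^k$. A naive estimate using only that $Q_{k,i}$ contains points within distance $\kappa_0\eta^k$ of its center $x_{k,i}$ yields merely $D_k\geqslant\kappa_0\eta^k$, which is short by the decisive factor of $2$; recovering this factor requires the geodesic structure of the angular metric. Concretely, I would fix a unit vector $v\perp x_{k,i}$ (available since $n\geqslant 3$) and the great-circle arc $\gamma(t)=\cos t\,x_{k,i}+\sin t\,v$; then $\gamma(\pm s)\in B_\rho(x_{k,i},\kappa_0\eta^k)\subset Q_{k,i}$ for all $s<\kappa_0\eta^k$, while $\langle\gamma(s),\gamma(-s)\rangle=\cos 2s$ gives $\rho(\gamma(s),\gamma(-s))=2s$, valid because $2\kappa_0\eta^k=\tfrac16\cdot 96^{-k}\leqslant\pi$. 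Letting $s\uparrow\kappa_0\eta^k$ shows the supremum of $\rho$-distances over $B_\rho(x_{k,i},\kappa_0\eta^k)$ equals $2\kappa_0\eta^k$, and as $D_k$ bounds all such distances from above we conclude $D_k\geqslant 2\kappa_0\eta^k$. With this in hand, the bookkeeping reductions of the first two paragraphs finish the proof.
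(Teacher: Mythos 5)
Your proof is correct and follows essentially the same route as the paper: both reduce the inclusion to the diameter comparison $\textup{diam}_\rho(Q_{k+1,j})\leqslant\tfrac12\textup{diam}_\rho(Q_{k,i})$ via the two bounds $\textup{diam}_\rho(Q_{k+1,j})\leqslant 2\kappa_1\eta^{k+1}$ and $\textup{diam}_\rho(Q_{k,i})\geqslant 2\kappa_0\eta^k$ together with the numerical coincidence $2\kappa_1\eta=\kappa_0$. The only difference is that you explicitly justify the lower diameter bound with the great-circle argument (correctly identifying where the factor $2$ comes from), whereas the paper simply asserts $\tfrac{1}{12}(1/96)^k\leqslant\tfrac12\textup{diam}_\rho(Q_{k,i})$ as a consequence of Proposition \ref{dyadic-thm}.
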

\begin{proof}
	Fix an integer $1\leqslant t\leqslant N$, recall $\mathscr{D}_t$ is a dyadic system in $(\partial\B_n,\rho)$ with the triple $(1/96,1/12,4)$.
	For each positive integer $k\geqslant 1$, by Proposition \ref{dyadic-thm}, we have 
	\[
		\frac{1}{12}\cdot\left(\frac{1}{96}\right)^k\leqslant \frac{1}{2}\textup{diam}_\rho(Q_{k,i})\leqslant 4\cdot\left(\frac{1}{96}\right)^k.
	\] 
	Consequently, we have the inclusion
	\[
	\widehat{Q}_{k+1,j}\subset\left\{z\in\B_n:\frac{z}{|z|}\in Q_{k,i},1-4\cdot\left(\frac{1}{96}\right)^{k+1}<|z|<1\right\}.
	\]
	Noting that
	\[
	\left\{z\in\B_n:\frac{z}{|z|}\in Q_{k,i},1-\frac{1}{24}\cdot\left(\frac{1}{96}\right)^k<|z|<1\right\}
	\subset\left(\widehat{Q}_{k,i}\right)_{1/2},
	\]
	we conclude that \[\widehat{Q}_{k+1,j}\subset\left(\widehat{Q}_{k,i}\right)_{1/2},
	\]
	which completes the proof.
\end{proof}

\begin{lemma}\label{doubling}
	Let $-1<\alpha<\infty$.
	For an integer $1\leqslant t\leqslant N$ and a cube $Q\in\mathscr{D}_t$, there exists a constant $C_4=C_4(n,\alpha)$, such that 
	\[
		|\widehat{Q}|_\alpha\leqslant C_4|\widehat{Q}\backslash\widehat{Q}_{1/2}|_\alpha.
	\]
\end{lemma}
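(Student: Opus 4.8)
The plan is to reduce the estimate to a purely one-dimensional radial comparison, exploiting the fact that $\widehat{Q}$ and $\widehat{Q}\setminus\widehat{Q}_{1/2}$ share the same angular base. Write $r=\tfrac{1}{2}\mathrm{diam}_\rho(Q)$, so that $\widehat{Q}=\{z:z/|z|\in Q,\ 1-r<|z|<1\}$ while $\widehat{Q}_{1/2}=\{z:z/|z|\in Q,\ 1-r/2<|z|<1\}$; consequently $\widehat{Q}\setminus\widehat{Q}_{1/2}$ has the same angular base $Q$ but radial range $1-r<|z|<1-r/2$. Passing to polar coordinates $z=t\xi$, with $d\nu=n\,t^{n-1}\,dt\,d\sigma(\xi)$ and $d\sigma$ the normalized surface measure, both quantities factor as $n c_\alpha\,\sigma(Q)$ times a radial integral, and this common angular factor cancels in the ratio. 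It therefore suffices to find $C_4=C_4(n,\alpha)$ with
\[
\int_{1-r}^{1} t^{n-1}(1-t^2)^\alpha\,dt \;\leqslant\; C_4\int_{1-r}^{1-r/2} t^{n-1}(1-t^2)^\alpha\,dt,
\]
uniformly over all admissible $r$. By \eqref{dyadic-to-ball} with $(\eta,\kappa_0,\kappa_1)=(1/96,1/12,4)$ one has $r\leqslant 4\cdot(1/96)^k\leqslant 1/24$ for every cube at every generation $k\geqslant 1$, so $r$ stays uniformly small; this uniformity in $k$ is precisely what yields a single constant $C_4$.

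Next I would substitute $u=1-t$, using $1-t^2=(1-t)(1+t)=u(2-u)$ and $t^{n-1}=(1-u)^{n-1}$, to rewrite the two radial integrals as
\[
\int_0^r (1-u)^{n-1}(2-u)^\alpha\,u^\alpha\,du \qquad\text{and}\qquad \int_{r/2}^r (1-u)^{n-1}(2-u)^\alpha\,u^\alpha\,du.
\]
Since $0<u<r\leqslant 1/24$, the slowly varying factors are trapped between positive constants: $(1-u)^{n-1}\in\big[(23/24)^{n-1},1\big]$, while $(2-u)^\alpha$ lies between its values at $u=0$ and $u=r$, hence between constants depending only on $\alpha$. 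Absorbing these into the constant reduces the claim to comparing the pure power integrals $\int_0^r u^\alpha\,du$ and $\int_{r/2}^r u^\alpha\,du$.

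Finally, the hypothesis $-1<\alpha$ does the essential work: it forces $\alpha+1>0$, so both power integrals converge and
\[
\frac{\int_0^r u^\alpha\,du}{\int_{r/2}^r u^\alpha\,du} \;=\; \frac{r^{\alpha+1}/(\alpha+1)}{\big(r^{\alpha+1}-(r/2)^{\alpha+1}\big)/(\alpha+1)} \;=\; \frac{1}{1-2^{-(\alpha+1)}},
\]
a finite number depending only on $\alpha$ and independent of $r$. Combining this with the two-sided bounds on the slowly varying factors produces the desired $C_4=C_4(n,\alpha)$. The only delicate point, and in fact the true content of the lemma, is the integrability and comparability of the integrands at the endpoint $u=0$: this is exactly where $\alpha+1>0$ is needed, and the estimate would break down for $\alpha\leqslant-1$. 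Everything else is a uniform, scale-invariant computation reflecting the standard doubling property of the weight $(1-|z|^2)^\alpha$ on Carleson boxes.
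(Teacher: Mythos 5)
Your argument is correct. The first reduction is the same as the paper's: pass to polar coordinates, cancel the common angular factor $\sigma(Q)$, and reduce the lemma to a uniform lower bound on the radial ratio $\int_{1-r}^{1-r/2} t^{n-1}(1-t^2)^\alpha\,dt \big/ \int_{1-r}^{1} t^{n-1}(1-t^2)^\alpha\,dt$. Where you diverge is in how that uniform bound is obtained. The paper treats the ratio as a function $g(r)$ on all of $[0,1]$, extends it continuously to $r=0$ by L'H\^opital, checks $g>0$ pointwise, and invokes compactness to get a positive minimum --- a soft argument that never uses any smallness of $r$. You instead exploit the structure of the dyadic systems: since every cube satisfies $Q\subset B_\rho(x,4\cdot 96^{-k})$ with $k\geqslant 1$, one has $r\leqslant 1/24$, so after the substitution $u=1-t$ the factors $(1-u)^{n-1}$ and $(2-u)^\alpha$ are trapped between explicit positive constants and the ratio collapses to the exact value $\bigl(1-2^{-(\alpha+1)}\bigr)^{-1}$ for the pure power integrals. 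Your route is more quantitative (it produces an explicit $C_4$) and correctly isolates $\alpha+1>0$ as the only real hypothesis; the paper's route is less computational and proves the slightly stronger statement that the doubling inequality holds for Carleson boxes of arbitrary aperture $r\in(0,1]$, not just dyadic ones. As a side remark, your explicit limit $1-2^{-(\alpha+1)}$ for the small-$r$ ratio is the correct one; the value $1-\tfrac12(3/4)^\alpha$ appearing in the paper's L'H\^opital computation looks like a typographical slip, though it does not affect the paper's conclusion since both quantities are positive for $\alpha>-1$.
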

\begin{proof}
Pick a cube $Q\in\mathscr{D}_t$, we shall find a positive constant $C_4$ that only depneds on $n$ and $\alpha$ such that
	\[
		\frac{|\widehat{Q}\backslash\widehat{Q}_{1/2}|_\alpha}{|\widehat{Q}|_\alpha}\geqslant\frac{1}{C_4}>0.
	\]
	Let $r=\textup{diam}_\rho(Q)/2$ and consider the function $h(t)=t^{n-1}(1-t^2)^\alpha$ on $[0,1]$. We have
	\begin{equation*}
		\begin{aligned}
			\frac{|\widehat{Q}\backslash\widehat{Q}_{1/2}|_\alpha}{|\widehat{Q}|_\alpha}
			=&\frac{n c_\alpha \int_Q d\sigma(\xi)\int_{1-r}^{1-\frac{r}{2}}t^{n-1}(1-t^2)^\alpha dt}{n c_\alpha \int_Q d\sigma(\xi)\int_{1-r}^1t^{n-1}(1-t^2)^\alpha dt}\\
			=&\frac{\int_{1-r}^{1-\frac{r}{2}}h(t) dt}{\int_{1-r}^1h(t) dt}.
		\end{aligned}
	\end{equation*}
	For $0<r\leqslant1,$ set 
	\[g(r)=\frac{\int_{1-r}^{1-\frac{r}{2}}h(t) dt}{\int_{1-r}^1h(t) dt}.\]
	Observe that
	\begin{equation*}
		\begin{aligned}
			\lim\limits_{r\rightarrow0} g(r)
			=&\lim\limits_{r\rightarrow0}\frac{-\frac{1}{2}h\left(1-\frac{r}{2}\right)+h(1-r)}{h(1-r)}\\
			=&1-\lim\limits_{r\rightarrow0}\frac{\left(1-\frac{r}{2}\right)^{n-1}\left(r-\frac{r^2}{4}\right)^\alpha}{2(1-r)^{n-1}(2r-r^2)^\alpha}\\
			=&1-\frac{1}{2}\cdot\left(\frac{3}{4}\right)^\alpha>0,
		\end{aligned}
	\end{equation*}
	and 
	\[
		g(1)=\frac{\int_{0}^{\frac{1}{2}}h(t) dt}{\int_{0}^{1}h(t) dt}>0.
	\]
	Let $g(0)=\lim\limits_{r\rightarrow0} g(r)$.
	Then, $g(r)$ is continuous on $[0,1]$.
	Hence, there exists $0<r_0<1$ such that 
	\[
		g(r_0)=\min\limits_{0\leqslant r\leqslant1}g(r).
	\]
	We claim that $g(r_0)>0$. Otherwise, if \[
	g(r_0)=0,
	\]
	then
	\[g(r_0)=\frac{\int_{1-r_0}^{1-\frac{r_0}{2}}h(t) dt}{\int_{1-r_0}^1h(t) dt}=0.\]
	Since the function $h$ is positive and integrable on $(0,1)$, we have
	\[
		\int_{1-r_0}^{1-\frac{r_0}{2}}h(t) dt=0.
	\]
	On the other hand, note that
	\[
	 h(t)>0,\quad\forall t\in[1-r_0,1-r_0/2]
	\]
	this yields contradiction and we get
	\[g(r_0)=\min\limits_{0\leqslant r\leqslant1}g(r)>0.\]
	Let 
	\[
		C_4=\frac{1}{\min\limits_{0\leqslant r\leqslant1}g(r)},
	\]
we get the desired lower bound, and	complete the whole proof.
\end{proof}

Let $1<p<\infty$ and $-1<\alpha<\infty$. 
Let $\mathscr{Q}_t,t=1,\cdots N$ be the Carleson dyadic system in Proposition \ref{dyadic-thm}.
A weight $\omega$ is said to be a Bekoll\'e-Bonami weight with respect to the Carleson dyadic system $\mathscr{Q}_t$ if
\[
	[\omega]_{p,\alpha,\mathscr{D}_t}:=\sup\limits_{Q\in\mathscr{D}_t}\frac{|\widehat{Q}|_{\omega,\alpha}}{|\widehat{Q}|_\alpha}\left(\frac{|\widehat{Q}|_{\omega^{1-p'},\alpha}}{|\widehat{Q}|_\alpha}\right)^{p-1}<\infty,
\]
\begin{definition}
 A weight $\omega$ on $B_n$ is called a Bekoll\'e-Bonami $B_{p,\alpha,\mathscr{D}}$ weight associated to Carleson box systems $\mathscr{Q}=\cup_{t=1}^N\mathscr{Q}_t$ if
	\[
	[\omega]_{p,\alpha,\mathscr{D}}:=\max_{t:1\leqslant t\leqslant N} [\omega]_{p,\alpha,\mathscr{D}_t}<\infty.
	\]
\end{definition}

\begin{lemma}\label{weight-equ}
	Let $\omega$ be a weight on the unit ball $\partial\B_n$. Then $\omega$ is a $B_{p,\alpha}$ weight if and only if  $\omega$ is a $B_{p,\alpha,\mathscr{D}}$ weight.
\end{lemma}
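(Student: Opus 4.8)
The plan is to reduce everything to the metric $\rho$ and to balls $B_\rho$, and then to transfer between $\rho$-balls and dyadic cubes by comparing their Carleson boxes. By Proposition~\ref{metric-angle} and the equivalence \eqref{new-bp}, the constant $[\omega]_{p,\alpha}$ is comparable, up to a factor depending only on $n,p,\alpha$, to the supremum of the ratios in \eqref{new-bp} taken over all $\rho$-balls $B_\rho(x,r)\subset\partial\B_n$. Thus it suffices to compare this $\rho$-ball quantity with $[\omega]_{p,\alpha,\mathscr{D}}=\max_t\sup_{Q\in\mathscr{D}_t}(\cdots)$. A geometric fact I would record first is that, for $0<r<1<\pi/2$, the $\rho$-diameter of the cap $B_\rho(x,r)$ equals exactly $2r$: the triangle inequality gives $\mathrm{diam}_\rho(B_\rho(x,r))\le 2r$, while antipodal points along a great circle through $x$ approach distance $2r$. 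This identity is what lets the two different radial cut-offs defining the Carleson boxes line up.

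First I would prove $[\omega]_{p,\alpha,\mathscr{D}}\lesssim[\omega]_{p,\alpha}$. Fix a cube $Q=Q_{k,i}\in\mathscr{D}_t$. By the sandwiching \eqref{dyadic-to-ball}, $Q$ lies between the $\rho$-balls $B_\rho(x_{k,i},\kappa_0\eta^k)$ and $B_\rho(x_{k,i},\kappa_1\eta^k)$, whose radii differ only by the fixed factor $\kappa_1/\kappa_0$. Matching the radial cut-offs, I would check that with $r_{\mathrm{in}}=\kappa_0\eta^k$ and $r_{\mathrm{out}}=\kappa_1\eta^k$ one has $\widehat{B}_\rho(x_{k,i},r_{\mathrm{in}})\subset\widehat{Q}\subset\widehat{B}_\rho(x_{k,i},r_{\mathrm{out}})$; here the outer inclusion uses $\tfrac12\mathrm{diam}_\rho(Q)\le\kappa_1\eta^k$, and the inner one uses $\tfrac12\mathrm{diam}_\rho(Q)\ge\tfrac12\mathrm{diam}_\rho(B_\rho(x_{k,i},\kappa_0\eta^k))=\kappa_0\eta^k$ from the diameter identity. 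Since Lemma~\ref{area-ball} shows that $|\widehat{B}_\rho(x,r)|_\alpha$ depends, up to constants independent of the centre $x$, only on the radius $r$ through $r^{n+\alpha}(2-r)^{\alpha+1}$, comparable radii give comparable $\alpha$-measures, so $|\widehat{Q}|_\alpha\approx|\widehat{B}_\rho(x_{k,i},\eta^k)|_\alpha$. Bounding the numerators $|\widehat{Q}|_{\omega,\alpha}$ and $|\widehat{Q}|_{\omega^{1-p'},\alpha}$ from above by those of the outer ball (monotonicity under inclusion) and the denominator $|\widehat{Q}|_\alpha$ from below by a constant times that of the outer ball, the dyadic ratio for $Q$ is controlled by a constant multiple of the $\rho$-ball quantity, hence by $[\omega]_{p,\alpha}$.

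For the reverse inequality $[\omega]_{p,\alpha}\lesssim[\omega]_{p,\alpha,\mathscr{D}}$, fix a $\rho$-ball $B_\rho=B_\rho(x,r)$. By Proposition~\ref{dyadic-thm} there are $t$ and a cube $Q\in\mathscr{D}_t$ with $B_\rho\subset Q$ and $\mathrm{diam}_\rho(Q)\le C_3\,\mathrm{diam}_\rho(B_\rho)$. Since $B_\rho\subset Q$ forces $\mathrm{diam}_\rho(Q)\ge\mathrm{diam}_\rho(B_\rho)=2r$, we get $1-\tfrac12\mathrm{diam}_\rho(Q)\le 1-r$, so the radial range of $\widehat{Q}$ contains that of $\widehat{B}_\rho$; together with the angular containment $B_\rho\subset Q$ this yields the box inclusion $\widehat{B}_\rho\subset\widehat{Q}$. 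Hence the weighted numerators for $B_\rho$ are dominated by those for $Q$, while $\mathrm{diam}_\rho(Q)\le 2C_3 r$ and Lemma~\ref{area-ball} give $|\widehat{B}_\rho|_\alpha\approx|\widehat{Q}|_\alpha$; combining these bounds exactly as above shows the $\rho$-ball ratio is at most a constant times the dyadic ratio for $Q$, and taking the supremum over $B_\rho$ finishes the proof.

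I expect the main obstacle to be the careful matching of the two different radial cut-offs used to define the Carleson boxes of $\rho$-balls ($1-r$) and of dyadic cubes ($1-\tfrac12\mathrm{diam}_\rho(Q)$); establishing the clean inclusions $\widehat{B}_\rho\subset\widehat{Q}$ and $\widehat{B}_\rho(x,r_{\mathrm{in}})\subset\widehat{Q}\subset\widehat{B}_\rho(x,r_{\mathrm{out}})$ is precisely where the identity $\mathrm{diam}_\rho(B_\rho(x,r))=2r$ and the comparability of radii enter. Once the inclusions and the centre-independent measure estimate of Lemma~\ref{area-ball} are in place, the weighted bounds follow by monotonicity of the measures and the elementary algebra of the $B_{p,\alpha}$ ratio, with all implied constants depending only on $n,p,\alpha$ (through $C_1,C_2,C_3,\kappa_0,\kappa_1,\eta,N$).
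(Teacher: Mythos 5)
Your proposal is correct and follows essentially the same route as the paper: sandwich each dyadic cube between two comparable $\rho$-balls via \eqref{dyadic-to-ball} for one direction, use Proposition~\ref{dyadic-thm} for the other, and invoke Lemma~\ref{area-ball} to compare the $\alpha$-measures of the Carleson boxes. Your explicit verification of the radial cut-off matching (via $\mathrm{diam}_\rho(B_\rho(x,r))=2r$) is a detail the paper passes over silently, but it does not change the argument.
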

\begin{proof}
	Suppose that $\omega$ is a $B_{p,\alpha}$ weight. Then
	$
	[\omega]_{p,\alpha}<\infty.
	$ 
	For each $1\leqslant t\leqslant N$, let
	\[
	\mathcal{P}_t=\{x_{k,i}\in\partial\B_n:1\leqslant k<\infty,1\leqslant i\leqslant M(k)\}
	\] 
	be the point set associated to the dyadic systems $\mathscr{D}_t$ in Proposition \ref{dyadic-thm}.
	By \eqref{dyadic-to-ball}, for any $Q\in\mathscr{D}_t$, there exist an integer $k\geqslant 1$ and a point $x\in\mathcal{P}_t$ such that
	\[
		B_\rho\left(x,\frac{1}{12\cdot96^k}\right)\subset Q\subset B_\rho\left(x,\frac{4}{96^k}\right).
	\]
Denote $B_1:=B_\rho\left(x,1/(12\cdot96^k)\right)$ and $B_2:=B_\rho\left(x,4/(96^k)\right)$.
	Then
	\begin{equation*}
		\begin{aligned}
			\frac{|\widehat{Q}|_{\omega,\alpha}}{|\widehat{Q}|_\alpha}\left(\frac{|\widehat{Q}|_{\omega^{1-p'},\alpha}}{|\widehat{Q}|_\alpha}\right)^{p-1}
			&\leqslant\frac{|\widehat{B_2}|_{\omega,\alpha}|\widehat{B_2}|_{\omega^{1-p'},\alpha}^{p-1}}{|\widehat{B_1}|_\alpha^p}.
		\end{aligned}
	\end{equation*}
	Hence, by \eqref{new-bp} and Lemma \ref{area-ball}, there is a constant $c_4=c_4(n,p,\alpha)>0$ such that
	\[
		\frac{|\widehat{Q}|_{\omega,\alpha}}{|\widehat{Q}|_\alpha}\left(\frac{|\widehat{Q}|_{\omega^{1-p'},\alpha}}{|\widehat{Q}|_\alpha}\right)^{p-1}\leqslant c_4\frac{|\widehat{B_2}|_{\omega,\alpha}|\widehat{B_2}|_{\omega^{1-p'},\alpha}^{p-1}}{|\widehat{B_2}|_\alpha^p}\leqslant c_4[w]_{p,\alpha}.
	\]
	Consequently, 
	\begin{equation}\label{weight-constant}
		[\omega]_{p,\alpha,\mathscr{D}}\leqslant c_4[\omega]_{p,\alpha}<\infty.
	\end{equation}
	
	On the other hand, we assume that $[\omega]_{p,\alpha,\mathscr{D}}<\infty$. 
	For each ball $B_\rho\subset \partial\B_n$, by Proposition \ref{dyadic-thm}, there exist an integer $1\leqslant t\leqslant N$ and a cube $Q\in\mathscr{D}_t$ such that $B_\rho\subset Q$ and $\textup{diam}_\rho(Q)\leqslant C_3\textup{diam}_\rho(B_\rho)$. 
	Then by Lemma \ref{area-ball}, there is a constant $c_5=c_5(n,p,\alpha)>0$ such that
	\begin{equation*}
		\begin{aligned}
			\frac{|\widehat{B}_\rho|_{\omega,\alpha}}{|\widehat{B}_\rho|_\alpha}\left(\frac{|\widehat{B}_\rho|_{\omega^{1-p'},\alpha}}{|\widehat{B}_\rho|_\alpha}\right)^{p-1}
			\leqslant\frac{|\widehat{Q}|_{\omega,\alpha}|\widehat{Q}|_{\omega^{1-p'},\alpha}^{p-1}}{|\widehat{B}_\rho|_\alpha^p}\leqslant c_5\frac{|\widehat{Q}|_{\omega,\alpha}|\widehat{Q}|_{\omega^{1-p'},\alpha}^{p-1}}{|\widehat{Q}|_\alpha^p}.
		\end{aligned}
	\end{equation*}
		Hence, by \eqref{new-bp},
	\[
		[\omega]_{p,\alpha}\leqslant c_5[\omega]_{p,\alpha,\mathscr{D}}<\infty.
	\]
	This completes the whole proof.
\end{proof}

\vspace{0.1cm}
\subsection{The Discretization of $P_\alpha$}
Let $N$ be the integer in Proposition \ref{dyadic-thm}.
For each integer $1\leqslant t\leqslant N$, consider the following positive integral kernel associated to the dyadic system $\mathscr{D}_t$ on $(\partial\B_n, \rho)$:
\begin{equation}\label{kernel-k}
\mathcal{K}^t_\alpha(x,y)=\sum\limits_{Q\in\mathscr{D}_t}\frac{\chi_{\widehat{Q}}(x)\chi_{\widehat{Q}}(y)}{|\widehat{Q}|_\alpha},\quad\forall x,y\in\B_n, 
\end{equation}
where $\chi_{\widehat{Q}}(x)$ is the characteristic function of $\widehat{Q}$ and $|\widehat{Q}|_\alpha=\nu_\alpha(\widehat{Q})$. 

Recall $\mathcal{R}_\alpha$ denotes the integral kernel of the $P_\alpha$, The following lemmas illustrate the relationship between $\mathcal{K}^t_\alpha(x,y)$ and $\mathcal{R}_\alpha(x,y)$.

\begin{lemma}\label{[x,y]}
	There is a constant $C_5>0$ such that for every distinct points $x,y\in\B_n$, there exists a ball $B_\rho(z,r)\subset\partial\B_n$, such that $\widehat{B}_\rho(z,r)$ contains $x,y$ and 
	$
		[x,y]\geqslant C_5 r.
	$
\end{lemma}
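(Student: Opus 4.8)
The plan is to construct the ball explicitly from the radial projections of $x$ and $y$, and then to verify both required properties by elementary estimates. Assume first that $x,y\neq 0$ (the origin cannot lie in any Carleson box, but it is a null set, so this is immaterial for the a.e.\ discretization in which the lemma is used). Write $x^{*}=x/|x|$ and $y^{*}=y/|y|$ for the radial projections onto $\partial\B_n$, and set
\[
z=x^{*},\qquad r=2\max\bigl\{1-|x|,\ 1-|y|,\ \rho(x^{*},y^{*})\bigr\}.
\]
First I would check that $\widehat{B}_\rho(z,r)$ contains both $x$ and $y$. The angular conditions hold because $\rho(z,x^{*})=0<r$ and $\rho(z,y^{*})=\rho(x^{*},y^{*})<r$, the factor $2$ making these strict. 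The radial conditions hold because $r\ge 2(1-|x|)$ forces $1-r\le 2|x|-1<|x|$, and likewise for $y$; note this works even when $r$ is large, in which case $B_\rho(z,r)$ is all of $\partial\B_n$ and the box is a full spherical shell.

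The heart of the matter is the reverse estimate $[x,y]\ge C_5 r$, which amounts to bounding each of the three quantities defining $r$ by a constant multiple of $[x,y]$. Recall $[x,y]^2=|x-y|^2+(1-|x|^2)(1-|y|^2)$. For the radial terms I would argue by a dichotomy: if $1-|y|\ge\tfrac12(1-|x|)$, then $(1-|x|^2)(1-|y|^2)\ge\tfrac12(1-|x|)^2$ yields $1-|x|\le\sqrt2\,[x,y]$ directly from the second term; otherwise $|y|>\tfrac{1+|x|}{2}$, so $|x-y|\ge|y|-|x|>\tfrac12(1-|x|)$ and the first term gives $1-|x|\le 2[x,y]$. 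By the symmetry of $[x,y]$ the same bound holds for $1-|y|$.

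The main obstacle is the angular term $\rho(x^{*},y^{*})$. Using the metric equivalence \eqref{metric-equi} it suffices to bound $|x^{*}-y^{*}|$, and splitting $x^{*}-y^{*}=x\bigl(\tfrac1{|x|}-\tfrac1{|y|}\bigr)+\tfrac{x-y}{|y|}$ together with $\bigl|\,|x|-|y|\,\bigr|\le|x-y|$ gives the distortion bound
\[
|x^{*}-y^{*}|\le\frac{2|x-y|}{\max\{|x|,|y|\}}.
\]
The difficulty is that this denominator degenerates near the center, so I would separate two cases. When $\max\{|x|,|y|\}\ge\tfrac12$ the bound reads $|x^{*}-y^{*}|\le 4|x-y|\le 4[x,y]$, hence $\rho(x^{*},y^{*})\le 2\pi[x,y]$. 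When $\max\{|x|,|y|\}<\tfrac12$ both points lie near the center, so $(1-|x|^2)(1-|y|^2)\ge(3/4)^2$ forces $[x,y]\ge 3/4$ while trivially $\rho(x^{*},y^{*})\le\pi$, again giving $\rho(x^{*},y^{*})\le 2\pi[x,y]$.

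Combining the three estimates yields $\max\{1-|x|,1-|y|,\rho(x^{*},y^{*})\}\le 2\pi[x,y]$, whence $r\le 4\pi[x,y]$ and the lemma follows with $C_5=1/(4\pi)$. I expect the angular estimate, specifically the near-center degeneration of the projection distortion, to be the only genuinely delicate point; the containment and the radial estimates are routine once the radius is chosen as above.
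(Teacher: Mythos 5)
Your proof is correct and takes essentially the same route as the paper: both center the ball at the radial projection of one of the points, take $r$ comparable to $\max\{1-|x|,1-|y|,\rho(x/|x|,y/|y|)\}$, and then show $[x,y]$ dominates this quantity (the paper via the identity $[x,y]^2=(1-|x||y|)^2+2|x||y|(1-\cos\theta)$, you via elementary dichotomies). If anything, your treatment of the near-center degeneration and of the strict inequalities in the containment is more explicit than the paper's.
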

\begin{proof}
	Pick $x,y\in\B_n$ with $\rho(x,y)=\theta$. 
	Without loss of generality, we assume 
	$|x|\geqslant|y|$.
Using the definition of $[x,y]$, we have
	\begin{equation*}
		\begin{aligned}
			[x,y]^2&=|x-y|^2+(1-|x|^2)(1-|y|^2)\\
			&=1-2\langle x,y\rangle+|x|^2|y|^2\\
			&=\big|1-|x||y|\big|^2+2|x||y|(1-\cos\theta)\\
			&\geqslant\big|1-|y|\big|^2+4|x||y|\sin^2\frac{\theta}{2}\\
			&\geqslant\big|1-|y|\big|^2+\frac{4}{\pi^2}|y|^2 \theta^2.
		\end{aligned}
	\end{equation*}
	Let $C_5=2/\pi$ and $$r=\max\{1-|y|,\theta\},z=x/|x|\in\partial\B_n.$$
	We have $x,y\in \widehat{B}_\rho(z,r)$ and
	\[
		[x,y]\geqslant C_5 r.
	\]
	This completes the proof.
\end{proof}

\begin{lemma}\label{discretization}
	There exists a constant $C_6=C_6(n,\alpha)>0$ such that for any $x,y\in\B_n$, we have
	\[
		|\mathcal{R}_\alpha(x,y)|\leqslant C_6 \sum\limits_{t=1}^N \mathcal{K}^t_\alpha(x,y).
	\]
\end{lemma}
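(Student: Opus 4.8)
The plan is to show that both sides of the asserted inequality are comparable to the single scale $[x,y]^{-(n+\alpha)}$, where $[x,y]^2=|x-y|^2+(1-|x|^2)(1-|y|^2)$ is precisely the quantity produced in Lemma \ref{[x,y]}. The right-hand side $\sum_{t=1}^N\mathcal{K}^t_\alpha(x,y)$ is a sum of reciprocal $\nu_\alpha$-volumes of all dyadic Carleson boxes (over the $N$ systems) containing both $x$ and $y$; it is therefore bounded below by the reciprocal volume of \emph{one} suitably small such box, whose $\nu_\alpha$-measure will turn out to be of order $[x,y]^{n+\alpha}$. The left-hand side is governed by the pointwise size of the reproducing kernel. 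Thus the lemma decouples into a soft, purely geometric lower bound for $\sum_t\mathcal{K}^t_\alpha$ and an analytic pointwise estimate for $\mathcal{R}_\alpha$.

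For the geometric half, fix distinct $x,y\in\B_n$. By Lemma \ref{[x,y]} there is a ball $B_\rho(z,r)\subset\partial\B_n$ with $x,y\in\widehat{B}_\rho(z,r)$ and $[x,y]\geqslant C_5 r$. Feeding this ball into Proposition \ref{dyadic-thm} yields an index $1\leqslant t\leqslant N$ and a cube $Q\in\mathscr{D}_t$ with $B_\rho(z,r)\subset Q$ and $\textup{diam}_\rho(Q)\leqslant C_3\,\textup{diam}_\rho(B_\rho(z,r))\leqslant 2C_3 r$. I then verify that $x,y\in\widehat{Q}$: their radial projections lie in $B_\rho(z,r)\subset Q$, while the inclusion $B_\rho(z,r)\subset Q$ forces $\textup{diam}_\rho(Q)\geqslant\textup{diam}_\rho(B_\rho(z,r))=\min\{2r,\pi\}$, so the radial cutoff obeys $\tfrac12\textup{diam}_\rho(Q)\geqslant\min\{r,\pi/2\}>\max\{1-|x|,1-|y|\}$. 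Hence the term attached to $Q$ already gives
\[
\sum_{t=1}^N\mathcal{K}^t_\alpha(x,y)\geqslant\frac{\chi_{\widehat{Q}}(x)\chi_{\widehat{Q}}(y)}{|\widehat{Q}|_\alpha}=\frac{1}{|\widehat{Q}|_\alpha}.
\]
Comparing $\widehat{Q}$ with the Carleson box of the enclosing ball $B_\rho(x_{k,i},\kappa_1\eta^k)\supset Q$ furnished by \eqref{dyadic-to-ball} and invoking the upper bound of Lemma \ref{area-ball}, I obtain $|\widehat{Q}|_\alpha\lesssim\textup{diam}_\rho(Q)^{n+\alpha}\lesssim r^{n+\alpha}\lesssim[x,y]^{n+\alpha}$, whence $\sum_{t=1}^N\mathcal{K}^t_\alpha(x,y)\gtrsim[x,y]^{-(n+\alpha)}$.

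The remaining and genuinely hard ingredient is the pointwise size estimate
\[
|\mathcal{R}_\alpha(x,y)|\leqslant C\,[x,y]^{-(n+\alpha)},\qquad x,y\in\B_n,
\]
for the $\mathcal{H}$-harmonic Bergman kernel. This is exactly where the absence of a closed form for $\mathcal{R}_\alpha$ is felt, so it is the main obstacle: one cannot simply read the decay off an explicit formula as in the holomorphic case. I would derive it from the size/asymptotic bounds available for the reproducing kernel of $\mathcal{B}^2_\alpha$ — the singularity has the same order $n+\alpha$ as the holomorphic and the classical harmonic Bergman kernels, consistent with $|\widehat{B}_\rho(z,r)|_\alpha\approx r^{n+\alpha}$ from Lemma \ref{area-ball} — i.e.\ from the kernel estimates underlying the $L^p$-boundedness of \cite{Ureyen2023}. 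Granting this, the two halves combine to
\[
|\mathcal{R}_\alpha(x,y)|\leqslant C\,[x,y]^{-(n+\alpha)}\leqslant C'\,\frac{1}{|\widehat{Q}|_\alpha}\leqslant C'\sum_{t=1}^N\mathcal{K}^t_\alpha(x,y),
\]
giving the claim with $C_6=C_6(n,\alpha)$. The only delicate regime is $[x,y]$ (equivalently $r$) near $0$ or near its maximum, which is handled uniformly by the two-sided bounds of Lemma \ref{area-ball} together with the $\min\{r,\pi/2\}$ truncations used above.
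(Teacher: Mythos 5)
Your proposal is correct and follows essentially the same route as the paper: both reduce the claim to the pointwise size estimate $|\mathcal{R}_\alpha(x,y)|\lesssim [x,y]^{-(n+\alpha)}$ (which the paper quotes directly from \cite[Theorem 1.2]{Ureyen2023}) combined with Lemma \ref{[x,y]}, Proposition \ref{dyadic-thm} and Lemma \ref{area-ball} to produce a single dyadic Carleson box $\widehat{Q}$ containing $x$ and $y$ with $|\widehat{Q}|_\alpha\lesssim [x,y]^{n+\alpha}$. Your explicit verification that $x,y\in\widehat{Q}$ (via the radial cutoff comparison) is a detail the paper leaves implicit in the inclusion $\widehat{B}_\rho(z,r)\subset\widehat{Q}$, but the argument is the same.
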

\begin{proof}
	For any two points $x,y\in\B_n$, by Lemma \ref{[x,y]} and Proposition \ref{dyadic-thm}, there exists a ball $B_\rho(z,r)\subset\partial\B_n$, an integer $1\leqslant t\leqslant N$ and a cube $Q\in\mathscr{D}_t$ such that $$x,y\in\widehat{B}_\rho(z,r)\subset \widehat{Q}$$ and 
	\[
	\textup{diam}_\rho(Q)\leqslant 2C_3 r \leqslant \frac{2C_3}{C_5}[x,y].
	\]
	By Lemma \ref{area-ball}, there exists a constant $c_6=c_6(n,\alpha)>0$ such that
	\begin{equation}\label{cube}
		|\widehat{Q}|_\alpha\leqslant c_6[x,y]^{n+\alpha}.
	\end{equation}
	By \cite[Theorem 1.2]{Ureyen2023}, the $\mathcal{H}$-harmonic Bergman kernel satisfies the following size estimate
\[
|\mathcal{R}_\alpha(x,y)|\leqslant \frac{c_7}{[x,y]^{n+\alpha}}, \quad\forall x,y\in\B_n,
\]
where the constant $c_7>0$ depends on $n$ and $\alpha$.
Then for any two distinct points $x,y\in\B_n$, by \eqref{cube}, we conclude the following pointwise inequality: 
	\begin{equation*}
		\begin{aligned}
			|\mathcal{R}_\alpha(x,y)|&\leqslant \frac{c_7}{[x,y]^{n+\alpha}}\\
			&\leqslant c_6c_7\frac{\chi_{\widehat{Q}}(x)\chi_{\widehat{Q}}(y)}{|\widehat{Q}|_\alpha}\\
			&\leqslant c_6c_7\sum\limits_{Q\in\mathscr{D}_t}\frac{\chi_{\widehat{Q}}(x)\chi_{\widehat{Q}}(y)}{|\widehat{Q}|_\alpha}\\
			&\leqslant c_6c_7\sum\limits_{t=1}^N \mathcal{K}^t_\alpha(x,y).
		\end{aligned}
	\end{equation*}
	Let $C_6=c_6c_7$. This completes the proof.
\end{proof}

\section{Proof of the Main Theorem}
\subsection{The case $p=2$}
The main result in this section is the following theorem:
\begin{theorem}
	Let $-1<\alpha<\infty$. 
	If $\omega$ is a $B_{2,\alpha}$ weight, then 
	\[
		P_\alpha: L^2(\omega d\nu_\alpha)\longrightarrow L^2(\omega d\nu_\alpha)
	\]
	is bounded. Moreover,  there exists a constant $C_7=C_7(n,\alpha)>0$ such that
	\[
		||P_\alpha||_{L^2(\omega d\nu_\alpha)\rightarrow L^2(\omega d\nu_\alpha)}\leqslant C_7[\omega]_{2,\alpha}.
	\]
\end{theorem}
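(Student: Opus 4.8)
The plan is to replace $P_\alpha$, whose kernel has no closed form, by the positive dyadic operators produced in Lemma~\ref{discretization}, and then run the standard sparse-operator argument for the sharp $B_{2,\alpha}$ (i.e. $A_2$) bound. Since
\[
|P_\alpha f(x)|\leqslant\int_{\B_n}|\mathcal R_\alpha(x,y)|\,|f(y)|\,d\nu_\alpha(y)\leqslant C_6\sum_{t=1}^N T^t_\alpha(|f|)(x),\qquad T^t_\alpha f:=\sum_{Q\in\mathscr D_t}\frac{1}{|\widehat Q|_\alpha}\Big(\int_{\widehat Q}f\,d\nu_\alpha\Big)\chi_{\widehat Q},
\]
it suffices to show $\|T^t_\alpha\|_{L^2(\omega d\nu_\alpha)\to L^2(\omega d\nu_\alpha)}\leqslant C[\omega]_{2,\alpha,\mathscr D_t}$ for each fixed $t$; Lemma~\ref{weight-equ} then turns $[\omega]_{2,\alpha,\mathscr D_t}$ into a multiple of $[\omega]_{2,\alpha}$, and summing over the $N$ systems gives the theorem. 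Throughout I write $\sigma=\omega^{-1}$ (so $\omega^{1-p'}=\sigma$ at $p=2$), $\langle\phi\rangle_{\widehat Q}=|\widehat Q|_\alpha^{-1}\int_{\widehat Q}\phi\,d\nu_\alpha$, and $\langle\phi\rangle^\sigma_{\widehat Q}=|\widehat Q|_{\sigma,\alpha}^{-1}\int_{\widehat Q}\phi\,\sigma\,d\nu_\alpha$, and note that $[\omega]_{2,\alpha,\mathscr D_t}=\sup_{Q\in\mathscr D_t}\langle\omega\rangle_{\widehat Q}\langle\sigma\rangle_{\widehat Q}$.

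First I would record the sparseness of the Carleson family $\{\widehat Q:Q\in\mathscr D_t\}$. Put $E_Q=\widehat Q\setminus\widehat Q_{1/2}$. If $Q'\subsetneq Q$ then the boxes are nested, $\widehat{Q'}\subset\widehat Q$, and Lemma~\ref{disjoint} (applied along the generation chain) gives $\widehat{Q'}\subset(\widehat Q)_{1/2}$, so $\widehat{Q'}\cap E_Q=\emptyset$; together with the dyadic property (ii) this makes the shells $\{E_Q\}_{Q\in\mathscr D_t}$ pairwise disjoint, while Lemma~\ref{doubling} gives $|\widehat Q|_\alpha\leqslant C_4|E_Q|_\alpha$. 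Thus $\{\widehat Q\}$ is a sparse family for $\nu_\alpha$.

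Next I would dualize and substitute $f=g\sigma$; since $\|g\sigma\|_{L^2(\omega d\nu_\alpha)}=\|g\|_{L^2(\sigma d\nu_\alpha)}$, the required bound is equivalent to the bilinear estimate, for $g,h\geqslant 0$,
\[
\mathcal B(g,h):=\sum_{Q\in\mathscr D_t}\frac{1}{|\widehat Q|_\alpha}\Big(\int_{\widehat Q}g\sigma\,d\nu_\alpha\Big)\Big(\int_{\widehat Q}h\omega\,d\nu_\alpha\Big)\leqslant C[\omega]_{2,\alpha,\mathscr D_t}\,\|g\|_{L^2(\sigma d\nu_\alpha)}\|h\|_{L^2(\omega d\nu_\alpha)}.
\]
Writing the $Q$-th term as $\langle\sigma\rangle_{\widehat Q}\langle\omega\rangle_{\widehat Q}\,\langle g\rangle^\sigma_{\widehat Q}\langle h\rangle^\omega_{\widehat Q}\,|\widehat Q|_\alpha$, bounding $\langle\sigma\rangle_{\widehat Q}\langle\omega\rangle_{\widehat Q}\leqslant[\omega]_{2,\alpha,\mathscr D_t}$ and $|\widehat Q|_\alpha\leqslant C_4|E_Q|_\alpha\leqslant C_4|E_Q|_{\sigma,\alpha}^{1/2}|E_Q|_{\omega,\alpha}^{1/2}$ (the last step by Cauchy–Schwarz against $\sigma^{1/2}\omega^{1/2}=1$), and then Cauchy–Schwarz in the sum over $Q$, I would arrive at
\[
\mathcal B(g,h)\leqslant C_4[\omega]_{2,\alpha,\mathscr D_t}\Big(\sum_Q(\langle g\rangle^\sigma_{\widehat Q})^2|E_Q|_{\sigma,\alpha}\Big)^{1/2}\Big(\sum_Q(\langle h\rangle^\omega_{\widehat Q})^2|E_Q|_{\omega,\alpha}\Big)^{1/2}.
\]
Finally, since the $E_Q$ are pairwise disjoint and $\langle g\rangle^\sigma_{\widehat Q}\leqslant M^t_\sigma g(x)$ for $x\in E_Q\subset\widehat Q$ (with $M^t_\sigma$ the dyadic maximal function over the boxes of $\mathscr Q_t$ taken against $\sigma\,d\nu_\alpha$), each factor is controlled by $\int_{\B_n}(M^t_\sigma g)^2\sigma\,d\nu_\alpha\leqslant C\|g\|_{L^2(\sigma d\nu_\alpha)}^2$ and its $\omega$-analogue, using only the $L^2$-boundedness of the dyadic maximal function with a dimension-free constant. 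This gives $\mathcal B(g,h)\leqslant C[\omega]_{2,\alpha,\mathscr D_t}\|g\|_{L^2(\sigma)}\|h\|_{L^2(\omega)}$, i.e. the asserted linear bound.

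The main obstacle is turning the $B_{2,\alpha}$ testing quantity into a genuinely \emph{linear} estimate: the naive termwise bound diverges because the boxes $\widehat Q$ are heavily nested, so a point lies in infinitely many of them. The device that resolves this is the sparse structure extracted from Lemmas~\ref{disjoint} and \ref{doubling}, which lets me replace the nested boxes by the pairwise disjoint shells $E_Q$ and pass the averages through a single unweighted maximal function. It is precisely here that the geometry of the $\mathcal H$-harmonic Carleson boxes enters, compensating for the absence of any explicit formula for $\mathcal R_\alpha$.
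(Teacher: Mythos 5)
Your proposal is correct and follows essentially the same route as the paper: domination of $\mathcal{R}_\alpha$ by the dyadic Carleson-box kernels, dualization with the substitution $f=g\sigma$ (the paper phrases this as Sawyer's duality trick), extraction of $[\omega]_{2,\alpha,\mathscr{D}_t}$, the sparseness supplied by Lemmas \ref{disjoint} and \ref{doubling}, and the $L^2$-boundedness of the weighted dyadic maximal operator. The only cosmetic difference is that you apply Cauchy--Schwarz on the shells $E_Q$ and then over the sum in $Q$, whereas the paper applies it once to the integral $\int (M^t_{\omega^{-1},\alpha}f)(M^t_{\omega,\alpha}g)\omega^{-1/2}\omega^{1/2}\,d\nu_\alpha$; the two computations are equivalent.
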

\begin{proof}
	Assume that $\omega$ is a $ B_{2,\alpha}$ weight. Let $\{\mathscr{D}_{t}\}_{t=1}^{N}$ be the collection of dyadic systems obtianed in Proposition \ref{dyadic-thm}. By the inequality \eqref{weight-constant}, we have
\[
	[\omega]_{2,\alpha,\mathscr{D}}\leqslant c_4[\omega]_{2,\alpha}<\infty.
\]
Using Sawyer's powerful duality trick \cite{Sawyer1982}, we have
		\[
		P_\alpha: L^2(\omega d\nu_\alpha)\longrightarrow L^2(\omega d\nu_\alpha)
	\]
	 is bounded if and only if
	 \[
		P_{\omega^{-1},\alpha}: L^2(\omega^{-1} d\nu_\alpha)\longrightarrow L^2(\omega d\nu_\alpha)
	\]
	is bounded, where
	 \[
		P_{\omega^{-1},\alpha}f(x)=\int_{\B_n}f(y)\mathcal{R}_\alpha(x,y)\omega^{-1}(y)d\nu_\alpha(y)
	\]
	for $f\in L^2(\omega^{-1} d\nu_\alpha)$.
	Moreover 
	\[
		||P_\alpha||_{L^2(\omega d\nu_\alpha)\rightarrow L^2(\omega d\nu_\alpha)}=||P_{\omega^{-1},\alpha}||_{L^2(\omega^{-1} d\nu_\alpha)\rightarrow L^2(\omega d\nu_\alpha)}.
	\]
	\vspace{0.2in}
	
	For $1\leqslant t\leqslant N$, we define the integral operator on the unit ball $\B_n$ induced by the discrete kernel \eqref{kernel-k}
	\begin{equation*}
		\begin{aligned}
			T_{\omega,\alpha}^t f(x):&=\int_{\B_n}f(y)\mathcal{K}^t_\alpha(x,y)\omega(y)d\nu_\alpha(y)\\
			&=\sum\limits_{Q\in\mathscr{D}_t}\frac{1}{|\widehat{Q}|_\alpha}\int_{\widehat{Q}}f(y)\omega(y)d\nu_\alpha(y) \chi_{\widehat{Q}}(x).
		\end{aligned}
	\end{equation*}
	By Lemma \ref{discretization}, we have
	\begin{equation*}
		\begin{aligned}
			|P_{\omega^{-1},\alpha}f(x)|&\leqslant\int_{\B_n}|f(y)| |\mathcal{R}_\alpha(x,y)|\omega^{-1}(y)d\nu_\alpha(y)\\
			&\leqslant C_6\sum\limits_{t=1}^N\int_{\B_n}|f(y)|  \mathcal{K}^t_\alpha(x,y)\omega^{-1}(y)d\nu_\alpha(y)\\
			&=C_6\sum\limits_{t=1}^N (T_{\omega^{-1},\alpha}^t |f|)(x).
		\end{aligned}
	\end{equation*}
	Hence, it suffices to prove that there exists a constant $c_8>0$ such that
	\[
		||T_{\omega^{-1},\alpha}^t f||_{L^2(\omega d\nu_\alpha)}\leqslant c_8 ||f||_{L^2(\omega^{-1} d\nu_\alpha)}
	\]
	for all $0\leqslant f\in L^2(\omega^{-1} d\nu_\alpha)$ and each $1\leqslant t\leqslant N$. 
	
	Note that for a fixed dyadic system $\mathscr{D}_t$, $T_{\omega^{-1},\alpha}^t f\geqslant 0$ when $f\geqslant 0$. 
	Consequentley,
	\[
		||T_{\omega^{-1},\alpha}^t f||_{L^2(\omega d\nu_\alpha)}=\sup\limits_{0\leqslant g \in  L^2(\omega d\nu_\alpha), ||g||_{L^2(\omega d\nu_\alpha)}=1}\int_{\B_n}(T_{\omega^{-1},\alpha}^t f)(x) g(x) \omega(x) d\nu_\alpha(x).
	\]
	Now we consider the right side of the last equality. 
	Let $$0\leqslant g\in L^2(\omega d\nu_\alpha),$$ we have
	\begin{equation*}
		\begin{aligned}
			&\int_{\B_n}(T_{\omega^{-1},\alpha}^t f)(x) g(x) \omega(x) d\nu_\alpha(x)\\
			=&\sum\limits_{Q\in\mathscr{D}_t}\frac{1}{|\widehat{Q}|_\alpha}\left(\int_{\widehat{Q}}f(x) \omega^{-1}(x) d\nu_\alpha(x) \right)\left(\int_{\widehat{Q}}g(x) \omega(x) d\nu_\alpha(x) \right)\\
			=&\sum\limits_{Q\in\mathscr{D}_t}\frac{|\widehat{Q}|_\alpha}{ |\widehat{Q}|_{\omega^{-1},\alpha}|\widehat{Q}|_{\omega,\alpha}}
			\frac{|\widehat{Q}|_{\omega^{-1},\alpha}|\widehat{Q}|_{\omega,\alpha}}{|\widehat{Q}|_\alpha^2}
			\left(\int_{\widehat{Q}}f(x) \omega^{-1}(x) d\nu_\alpha(x) \right)\left(\int_{\widehat{Q}}g(x) \omega(x) d\nu_\alpha(x) \right)\\
			\leqslant& [\omega]_{2,\alpha,\mathscr{D}}\sum\limits_{Q\in\mathscr{D}_t} |\widehat{Q}|_\alpha\left(\frac{1}{ |\widehat{Q}|_{\omega^{-1},\alpha}}\int_{\widehat{Q}}f(x) \omega^{-1}(x) d\nu_\alpha(x) \right)\left(\frac{1}{ |\widehat{Q}|_{\omega,\alpha}}\int_{\widehat{Q}}g(x) \omega(x) d\nu_\alpha(x) \right).
		\end{aligned}
	\end{equation*}
	By Lemma \ref{doubling}, we have
	\[
		|\widehat{Q}|_\alpha\leqslant C_4|\widehat{Q}\backslash\widehat{Q}_{1/2}|_\alpha.
	\]
	Therefore, we have 
	\begin{equation*}
		\begin{aligned}
			&\int_{\B_n}(T_{\omega^{-1},\alpha}^t f)(x) g(x) \omega(x) d\nu_\alpha(x)\\
			\leqslant& C_4 [\omega]_{2,\alpha,\mathscr{D}}\sum\limits_{Q\in\mathscr{D}_t} |\widehat{Q}\backslash\widehat{Q}_{1/2}|_\alpha\left(\frac{1}{ |\widehat{Q}|_{\omega^{-1},\alpha}}\int_{\widehat{Q}}f(x) \omega^{-1}(x) d\nu_\alpha(x) \right)\left(\frac{1}{ |\widehat{Q}|_{\omega,\alpha}}\int_{\widehat{Q}}g(x) \omega(x) d\nu_\alpha(x) \right)\\
			=& C_4 [\omega]_{2,\alpha,\mathscr{D}}\sum\limits_{Q\in\mathscr{D}_t} \int_{\widehat{Q}\backslash\widehat{Q}_{1/2}}\left(\frac{1}{ |\widehat{Q}|_{\omega^{-1},\alpha}}\int_{\widehat{Q}}f(x) \omega^{-1}(x) d\nu_\alpha(x) \right)\left(\frac{1}{ |\widehat{Q}|_{\omega,\alpha}}\int_{\widehat{Q}}g(x) \omega(x) d\nu_\alpha(x) \right)d\nu_\alpha.
		\end{aligned}
	\end{equation*}
\vspace{0.1in}
	Note that Lemma \ref{disjoint} implies $\{\widehat{Q}\backslash\widehat{Q}_{1/2}\}_{Q\in\mathscr{D}_t}$ are disjoint. Consequently,
	\begin{equation*}
		\begin{aligned}
			&\int_{\B_n}(T_{\omega^{-1},\alpha}^t f)(x) g(x) \omega(x) d\nu_\alpha(x)\\
			\leqslant& C_4 [\omega]_{2,\alpha,\mathscr{D}}\int_{\B_n}\left(\frac{1}{ |\widehat{Q}|_{\omega^{-1},\alpha}}\int_{\widehat{Q}}f(x) \omega^{-1}(x) d\nu_\alpha(x) \right)\left(\frac{1}{ |\widehat{Q}|_{\omega,\alpha}}\int_{\widehat{Q}}g(x) \omega(x) d\nu_\alpha(x) \right)d\nu_\alpha\\
			\leqslant& C_4 [\omega]_{2,\alpha,\mathscr{D}} \int_{\B_n}(M^t_{\omega^{-1},\alpha}f)(M^t_{\omega,\alpha}g)\omega^{-\frac{1}{2}}\omega^\frac{1}{2} d\nu_\alpha\\
			\leqslant& C_4 [\omega]_{2,\alpha,\mathscr{D}} ||M_{\omega^{-1},\alpha}^t f||_{L^2(\omega^{-1} d\nu_\alpha)}||M_{\omega,\alpha}^t g||_{L^2(\omega d\nu_\alpha)}.
		\end{aligned}
	\end{equation*}
	Here $M_{\omega,\alpha}^t $ is the maximal operator associated to the dyadic system $\mathscr{D}_t$ with respect to measure $\omega d\nu_\alpha$, i.e.,
	\[
		M_{\omega,\alpha}^t f(x)=\sup\limits_{Q\in\mathscr{D}_t}\frac{\chi_{\widehat{Q}}(x)}{|\widehat{Q}|_{\omega,\alpha}}\int_{\widehat{Q}}|f|\omega d\nu_\alpha.
	\]
	
Recall that $\mathscr{D}_t$ is a dyadic system, by the classical result \cite[Theorem 5.7]{Tolsa} on the maxiaml operator on the dyadic systmes, there exists  a constant $c_9$, independent of the weight $\omega$ and the dyadic system $\mathscr{D}_t$, such that
	\[
	||M_{\omega,\alpha}^t f||_{L^2(\omega d\nu_\alpha)}\leqslant c_9||f||_{L^2(\omega d\nu_\alpha)}, \quad	\forall f\in L^2(\omega d\nu_\alpha).
	\]
	We conclude that
	\[
		\int_{\B_n}(T_{\omega^{-1},\alpha}^t f)(x) g(x) \omega(x) d\nu_\alpha(x)\leqslant C_4 c_9^2 [\omega]_{2,\alpha,\mathscr{D}} ||f||_{L^2(\omega^{-1} d\nu_\alpha)}||g||_{L^2(\omega d\nu_\alpha)}.
	\]
	Let $C_7=C_6Nc_4C_4c_9^2$, we have
	\[
		||P_\alpha||_{L^2(\omega d\nu_\alpha)\rightarrow L^2(\omega d\nu_\alpha)}\leqslant C_7[\omega]_{2,\alpha}.
	\]
	This completes the proof.
\end{proof}

\vspace{0.1cm}
\subsection{The extrapolation method for the case $p\neq2$}
The main goal of this subsection is to obtain the weighted $L^p$-estimate for $P_\alpha$. First of all, we shall adapt Rubio de Francia's extrapolation method to prove the case $p>2$ \cite{GR}. In addition, for the case $1<p<2$ follows from the following simple duality argument.
\begin{lemma}\label{lemma4.1}
	Let $1<p,p'<\infty,\frac{1}{p}+\frac{1}{p'}=1$, we have
	\begin{enumerate}[{(i)}]
		\item $||P_\alpha||_{L^p(\omega d\nu_\alpha)\rightarrow L^p(\omega d\nu_\alpha)}=||P_\alpha||_{L^{p'}(\omega^{1-p'} d\nu_\alpha)\rightarrow L^{p'}(\omega^{1-p'} d\nu_\alpha)};$
		\item $[\omega]_{p,\alpha}^{\frac{1}{p-1}}=[\omega^{1-p'}]_{p',\alpha}.$
	\end{enumerate}
\end{lemma}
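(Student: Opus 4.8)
The plan is to treat the two assertions separately, deriving (i) from the self-adjointness of $P_\alpha$ together with a weighted duality identification, and (ii) from a direct exponent computation.

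For part (i), I would first record that the reproducing kernel $\mathcal{R}_\alpha(x,y)$ is real-valued and symmetric, so that $P_\alpha$ is formally self-adjoint with respect to the \emph{unweighted} pairing $\langle f,g\rangle_\alpha := \int_{\B_n} f\,\overline{g}\,d\nu_\alpha$; that is, $\langle P_\alpha f, g\rangle_\alpha = \langle f, P_\alpha g\rangle_\alpha$ for $f,g$ in a suitable dense class. Next I would identify the dual of $L^p(\omega\,d\nu_\alpha)$ under this pairing. Writing $\int f\bar g\,d\nu_\alpha = \int (f\omega^{1/p})(\bar g\,\omega^{-1/p})\,d\nu_\alpha$ and using the arithmetic identity $p'/p = p'-1$ (equivalently $-p'/p = 1-p'$), Hölder's inequality yields $\|g\,\omega^{-1/p}\|_{L^{p'}(d\nu_\alpha)} = \|g\|_{L^{p'}(\omega^{1-p'}d\nu_\alpha)}$, so that $\langle\cdot,\cdot\rangle_\alpha$ realizes an isometric duality $(L^p(\omega\,d\nu_\alpha))^* \cong L^{p'}(\omega^{1-p'}d\nu_\alpha)$.

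The conclusion of (i) then follows abstractly: for $f \in L^p(\omega\,d\nu_\alpha)$ one has
\[
\|P_\alpha f\|_{L^p(\omega d\nu_\alpha)} = \sup_{\|g\|_{L^{p'}(\omega^{1-p'}d\nu_\alpha)}\le 1}|\langle P_\alpha f, g\rangle_\alpha| = \sup_{\|g\|\le 1}|\langle f, P_\alpha g\rangle_\alpha|,
\]
and taking the supremum over $\|f\|_{L^p(\omega d\nu_\alpha)}\le 1$ gives $\|P_\alpha\|_{L^p(\omega)\to L^p(\omega)} = \|P_\alpha\|_{L^{p'}(\omega^{1-p'})\to L^{p'}(\omega^{1-p'})}$. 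To close the loop I would verify the companion identity $(1-p')(1-p)=1$, a consequence of $p+p'=pp'$, which gives $(\omega^{1-p'})^{1-p}=\omega$ and confirms that the dual of $L^{p'}(\omega^{1-p'}d\nu_\alpha)$ is again $L^p(\omega\,d\nu_\alpha)$, so the two operator norms are genuinely computed over dual pairs. This identity holds regardless of whether $P_\alpha$ is bounded, so no a priori boundedness is needed.

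For part (ii), set $\sigma = \omega^{1-p'}$. Using $(p')'=p$ and $\sigma^{1-p}=(\omega^{1-p'})^{1-p}=\omega$ from the identity above, the definition of the Bekoll\'e-Bonami constant gives
\[
[\sigma]_{p',\alpha} = \sup_{B}\frac{|\widehat{B}|_{\omega^{1-p'},\alpha}}{|\widehat{B}|_\alpha}\left(\frac{|\widehat{B}|_{\omega,\alpha}}{|\widehat{B}|_\alpha}\right)^{p'-1}.
\]
Writing $A = |\widehat{B}|_{\omega,\alpha}/|\widehat{B}|_\alpha$ and $D = |\widehat{B}|_{\omega^{1-p'},\alpha}/|\widehat{B}|_\alpha$, we have $[\omega]_{p,\alpha}=\sup_B A D^{p-1}$ and $[\sigma]_{p',\alpha}=\sup_B D A^{p'-1}$. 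The crucial arithmetic observation is $\tfrac{1}{p-1}=p'-1$, whence $(A D^{p-1})^{1/(p-1)} = A^{p'-1}D$; since $t\mapsto t^{1/(p-1)}$ is increasing, the power passes through the supremum to give $[\omega]_{p,\alpha}^{1/(p-1)} = \sup_B A^{p'-1}D = [\sigma]_{p',\alpha}$, which is exactly (ii). I expect the only genuine care to lie in part (i)—verifying self-adjointness on a dense subspace and the isometric nature of the weighted duality, which needs the measures to be $\sigma$-finite (guaranteed since $\omega$ is a weight); part (ii) is a purely algebraic identity whose only obstacle is bookkeeping the conjugate-exponent arithmetic correctly.
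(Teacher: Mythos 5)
Your proof is correct and is exactly the standard duality argument the paper has in mind: the paper states Lemma \ref{lemma4.1} without proof, merely remarking that the case $1<p<2$ ``follows from a simple duality argument,'' and your use of the self-adjointness of $P_\alpha$ under the unweighted pairing, the identification $(L^p(\omega\,d\nu_\alpha))^*\cong L^{p'}(\omega^{1-p'}d\nu_\alpha)$, and the exponent identities $p'-1=\tfrac{1}{p-1}$ and $(1-p')(1-p)=1$ supplies precisely the intended details. Both the duality computation in (i) and the algebraic manipulation in (ii) check out.
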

The remaining proof of Theorem \ref{Thm-key} (the case $p>2$) follows from the following general known result of the  Bekoll\'e-Bonami weight $\omega$ on the real unit ball $\B_n$. For the sake of completeness and the reader’s convenience, we include a proof here.
\begin{lemma}\label{exp}
	Let $R$ be an bounded operator on $ L^2(\omega d\nu_\alpha)$. 
	If there exists a constant $C_8$ independent of the weight $\omega$ such that 
	\[
		||R||_{ L^2(\omega d\nu_\alpha)\rightarrow L^2(\omega d\nu_\alpha)}\leqslant C_8 [\omega]_{2,\alpha}\quad \mbox{for all }\omega\in  B_{2,\alpha},
	\]
	then there exists a constant $C_9$ independent of the weight $\omega$ such that
	\[
		||R||_{ L^p(\omega d\nu_\alpha)\rightarrow L^p(\omega d\nu_\alpha)}\leqslant C_9 [\omega]_{p,\alpha}\quad \mbox{for all } \omega\in  B_{p,\alpha}\mbox{ and } p>2.
	\]
\end{lemma}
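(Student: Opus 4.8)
The plan is to run Rubio de Francia's extrapolation machinery, following the scheme in \cite{GR}. The strategy is to prove the $L^p$ bound by a duality argument against a test function $h\in L^{p'}(\omega\,d\nu_\alpha)$ with $\|h\|_{L^{p'}(\omega d\nu_\alpha)}=1$, and to build an auxiliary weight that converts the unknown $L^p$ estimate into the already-established $L^2$ estimate. First I would set up the Rubio de Francia algorithm adapted to the measure $\omega\,d\nu_\alpha$. For $p>2$ and a nonnegative $h$, define the iteration operator
\[
\mathcal{R}h:=\sum_{k=0}^\infty \frac{1}{(2\|M\|)^k}\,M^k h,
\]
where $M$ is the (uncentered Hardy--Littlewood-type) maximal operator bounded on $L^{(p/2)'}(\omega\,d\nu_\alpha)$ with operator norm $\|M\|$, and $M^k$ denotes its $k$-fold iterate. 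The key properties I would record are: (a) the domination $h\le \mathcal{R}h$ pointwise; (b) the comparability of norms $\|\mathcal{R}h\|_{L^{(p/2)'}(\omega d\nu_\alpha)}\le 2\|h\|_{L^{(p/2)'}(\omega d\nu_\alpha)}$; and (c) the crucial $A_1$-type self-improvement $M(\mathcal{R}h)\le 2\|M\|\,\mathcal{R}h$, so that $\mathcal{R}h$ is a weight whose $A_1$ (equivalently $B_{1,\alpha}$-type) constant is controlled by an absolute multiple of $\|M\|$.

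Next I would exploit the factorization structure of Bekoll\'e--Bonami weights in the $p=2$ estimate. The goal is to exhibit a weight $W$ lying in $B_{2,\alpha}$ with $[W]_{2,\alpha}$ controlled by $[\omega]_{p,\alpha}$, against which the hypothesis gives a linear bound for $R$. The natural candidate is
\[
W:=\omega\,(\mathcal{R}h)^{1-\frac{2}{p'}}\quad\text{or equivalently}\quad W=\omega^{\theta}(\mathcal{R}h)^{\sigma}
\]
for exponents $\theta,\sigma$ chosen so that $W\in B_{2,\alpha}$. Using H\"older's inequality with exponents $p/2$ and $(p/2)'$ on each Carleson box $\widehat{B}_\rho$, together with the $A_1$ property (c) of $\mathcal{R}h$ and the $B_{p,\alpha}$ condition on $\omega$ via the equivalent formulation \eqref{new-bp}, one checks that
\[
[W]_{2,\alpha}\le C\,[\omega]_{p,\alpha}
\]
with $C=C(n,p,\alpha)$ independent of $\omega$ and $h$. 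This is the heart of the argument: the extrapolation constant tracks linearly in $[\omega]_{p,\alpha}$ because the $p=2$ hypothesis is linear in $[W]_{2,\alpha}$, and $[W]_{2,\alpha}\lesssim[\omega]_{p,\alpha}$.

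Then I would close the loop by chaining the estimates. Writing the $L^p(\omega d\nu_\alpha)$ norm of $Rf$ through duality and inserting $\mathcal{R}h$ in place of $h$ (legitimate by the pointwise domination (a)), I would apply H\"older's inequality to split the integral $\int |Rf|\,h\,\omega\,d\nu_\alpha$ into a factor involving $|Rf|^2 W^{-1}$ and a factor involving $\mathcal{R}h$; the first factor is estimated by the $L^2(W\,d\nu_\alpha)$ boundedness of $R$ applied to $f$, which by the hypothesis is bounded by $C_8[W]_{2,\alpha}\le C\,[\omega]_{p,\alpha}$, and the second factor is controlled by property (b). Collecting constants yields
\[
\|Rf\|_{L^p(\omega d\nu_\alpha)}\le C_9\,[\omega]_{p,\alpha}\,\|f\|_{L^p(\omega d\nu_\alpha)}.
\]
The main obstacle I anticipate is the bookkeeping in the second paragraph: verifying that the explicit exponents in the definition of $W$ make it genuinely a $B_{2,\alpha}$ weight and that the two H\"older factors are balanced so the $B_{2,\alpha}$ constant of $W$ is bounded by a \emph{linear} multiple of $[\omega]_{p,\alpha}$ (rather than a higher power), since the sharp exponent $\max\{1,1/(p-1)\}$ in Theorem \ref{Thm-key} depends on getting this dependence exactly right. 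The boundedness of the dyadic maximal operator $M$ on the spaces $L^{(p/2)'}(\omega\,d\nu_\alpha)$ is a standard input that I would invoke from the same maximal-function theory used in the $p=2$ case (cf. \cite{Tolsa}), with its norm $\|M\|$ independent of $\omega$.
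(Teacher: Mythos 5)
Your overall strategy is the paper's: dualize $\|Rf\|_{L^p(\omega d\nu_\alpha)}^2$ against $h\in L^{(p/2)'}(\omega d\nu_\alpha)$, run a Rubio de Francia iteration to replace $h$ by a majorant $D(h)$ with controlled norm, and apply the $p=2$ hypothesis to the weight $D(h)\omega$. But the step you explicitly defer --- producing a majorant with $[D(h)\omega]_{2,\alpha}\leqslant C[\omega]_{p,\alpha}$ --- is where the lemma is actually proved, and the construction you sketch does not deliver it. If $M$ is the maximal operator taken with respect to $\omega\,d\nu_\alpha$ (the only reading under which $\|M\|_{L^{(p/2)'}(\omega d\nu_\alpha)}$ is independent of $\omega$, as you claim via \cite{Tolsa}), then $\mathcal{R}h$ is an $A_1$ weight \emph{relative to the measure} $\omega\,d\nu_\alpha$, and the natural estimate of $[\mathcal{R}h\cdot\omega]_{2,\alpha}$ on a Carleson box produces the product $\langle\omega\rangle_{\alpha}\langle\omega^{-1}\rangle_{\alpha}$, i.e.\ it requires $\omega\in B_{2,\alpha}$ --- which a $B_{p,\alpha}$ weight with $p>2$ need not satisfy. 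If instead $M=M_\alpha$ (averages with respect to $d\nu_\alpha$), its boundedness on $L^{(p/2)'}(\omega d\nu_\alpha)$ is simply not available from $\omega\in B_{p,\alpha}$, let alone with $\omega$-independent norm. Separately, your candidate $W=\omega(\mathcal{R}h)^{1-2/p'}$ has exponent $1-2/p'=(2-p)/p<0$, and unless the power of $\mathcal{R}h$ is exactly $1$ the insertion $\int|Rf|^2h\omega\,d\nu_\alpha\leqslant\int|Rf|^2\,\mathcal{R}h\,\omega\,d\nu_\alpha=\|Rf\|^2_{L^2(Wd\nu_\alpha)}$ no longer holds, so the chaining in your last paragraph (which also mixes $\int|Rf|\,h\,\omega$ with $\int|Rf|^2h\,\omega$) does not close.

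The paper resolves precisely this point by generating the iteration not from a linear maximal operator but from
\[
S_{\omega,\alpha}(h)=\Bigl(\frac{M_\alpha(|h|^{1/\phi(p)}\omega)}{\omega}\Bigr)^{\phi(p)},\qquad \phi(p)=\frac{p-2}{p-1}.
\]
Two facts are then balanced against each other: Buckley's bound for $M_\alpha$ on $L^{p'}(\omega^{1-p'}d\nu_\alpha)$ gives $\|S_{\omega,\alpha}\|_{L^{(p/2)'}(\omega d\nu_\alpha)}\leqslant C^{\phi(p)}[\omega]_{p,\alpha}^{\phi(p)}=:A$ (this norm is \emph{not} independent of $\omega$), while H\"older's inequality on each Carleson box, using the pointwise bound $M_\alpha(h^{1/\phi(p)}\omega)\geqslant\langle h^{1/\phi(p)}\omega\rangle_{\widehat B,\alpha}$, yields $\langle h\omega\rangle_{\widehat B,\alpha}\,\langle(S_{\omega,\alpha}(h)\omega)^{-1}\rangle_{\widehat B,\alpha}\leqslant[\omega]_{p,\alpha}^{1/(p-1)}$, where only $\langle\omega^{1-p'}\rangle_{\widehat B,\alpha}$ (not $\langle\omega^{-1}\rangle_{\widehat B,\alpha}$) appears. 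Since $S_{\omega,\alpha}(D(h))\leqslant 2A\,D(h)$, one gets $[D(h)\omega]_{2,\alpha}\leqslant 2A[\omega]_{p,\alpha}^{1/(p-1)}\lesssim[\omega]_{p,\alpha}^{\phi(p)+\frac{1}{p-1}}=[\omega]_{p,\alpha}$, the two exponents summing exactly to $1$. Without this specific choice of generator and exponent, the linear dependence on $[\omega]_{p,\alpha}$ --- which you correctly identify as the whole point --- is not obtained, so as written your argument has a genuine gap at its central step.
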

\begin{proof}
For $p>2$, let
\[
\phi(p)=\frac{p-2}{p-1}
\]
then  $p'/\phi(p)=p/(p-2)$ is the dual number of $p/2$.
By the standard Rubio de Francia's extrapolation, we only need to find an operator $$D:L^{p'/\phi(p)}(\omega d\nu_\alpha) \rightarrow L^{p'/\phi(p)}(\omega d\nu_\alpha)$$ such that for all 
\[
	0\leqslant h\in L^{p'/\phi(p)}(\omega d\nu_\alpha)\quad \text{and}\quad ||h||_{L^{p'/\phi(p)}(\omega d\nu_\alpha)}=1,\]
it has following properties:
\begin{itemize}
	\item[(I)] $h\leqslant D(h)$;
	\item[(II)] $||D(h)||_{L^{p'/\phi(p)}(\omega d\nu_\alpha)}\leqslant 2$ and
	\item[(III)] $[D(h)\omega]_{2,\alpha}\leqslant C_{10}[\omega]_{p,\alpha}$, where the constant $C_{10}$ is independent of $\omega$ and $h$.
\end{itemize}
Suppose that such operator $D$ exists. Observe that for all $f\in L^p(\omega d\nu_\alpha)$, we have
\[
||Rf||^2_{L^p(\omega d\nu_\alpha)}=\sup\limits_{||h||_{L^{p'/\phi(p)}(\omega d\nu_\alpha)}=1,h\geqslant0}\int_{\B_n}|Rf|^2 h \omega d\nu_\alpha.
\]
Then
\begin{equation*}
	\begin{aligned}
		||Rf||^2_{L^p(\omega d\nu_\alpha)}
		=&\sup\limits_{||h||_{L^{p'/\phi(p)}(\omega d\nu_\alpha)}=1,h\geqslant0}\int_{\B_n}|Rf|^2 D(h) \omega d\nu_\alpha\\
		\leqslant& C_8^2\sup\limits_{||h||_{L^{p'/\phi(p)}(\omega d\nu_\alpha)}=1,h\geqslant0}\left([D(h)\omega]_{2,\alpha}^2\int_{\B_n}|f|^2 D(h) \omega d\nu_\alpha\right)\\
		\leqslant& C_8^2C_{10}^2[\omega]_{p,\alpha}^2 \sup\limits_{||h||_{L^{p'/\phi(p)}(\omega d\nu_\alpha)}=1,h\geqslant0}\left(||f||^2_{L^p(\omega d\nu_\alpha)}||D(h)||_{L^{p'/\phi(p)}(\omega d\nu_\alpha)}\right)\\
		\leqslant& 2C_8^2C_{10}^2[\omega]_{p,\alpha}^2||f||^2_{L^p(\omega d\nu_\alpha)}.
	\end{aligned}
\end{equation*}
This completes the proof up to the existence of an operator $D$.
\end{proof}

\vspace{0.1in}
Next, we show that the following maximal operator on the real unit ball $\B_n$ satisifes the properties required by the operator $D$.

Let  $M_\alpha$ be the maximal operator with respect to measure $d\nu_\alpha$, i.e.,
\[
	M_{\alpha}f(x)=\sup\limits_{B\subset\partial\B_n}\frac{\chi_{\widehat{B}}(x)}{|\widehat{B}|_\alpha}\int_{\widehat{B}}|f|d\nu_\alpha,
\]
where the supremum runs over all balls $B\subset\partial \B_n$. 
\begin{lemma}
	For $\omega\in B_{p,\alpha}$, we have
	\[
		||M_{\alpha}||_{L^p(\omega d\nu_\alpha)\rightarrow L^p(\omega d\nu_\alpha)}\leqslant C_{11}[\omega]_{p,\alpha}^{\frac{1}{p-1}},
	\]
	where $C_{11}$ is a constant dependent on $n$ and $\alpha$. 
\end{lemma}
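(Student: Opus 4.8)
The plan is to prove the weighted $L^p$ boundedness of the maximal operator $M_\alpha$ by reducing it to the dyadic maximal operators $M_{\mathscr{D}_t}$ associated to the Carleson box systems from Proposition~\ref{dyadic-thm}, and then to establish the dyadic estimate directly. First I would observe that for any Euclidean ball $B \subset \partial\B_n$, Proposition~\ref{metric-angle} and Proposition~\ref{dyadic-thm} produce an index $1 \leqslant t \leqslant N$ and a cube $Q \in \mathscr{D}_t$ with $B_\rho \subset Q$ and $\mathrm{diam}_\rho(Q) \leqslant C_3\,\mathrm{diam}_\rho(B_\rho)$. By Lemma~\ref{area-ball}, the doubling property yields $|\widehat{Q}|_\alpha \leqslant c\,|\widehat{B}|_\alpha$ with $c = c(n,\alpha)$, so the averaging over $\widehat{B}$ is dominated (up to this constant) by the averaging over $\widehat{Q}$. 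This gives the pointwise comparison
\[
M_\alpha f(x) \leqslant c\sum_{t=1}^N M_{\mathscr{D}_t} f(x),
\]
where $M_{\mathscr{D}_t} f(x) = \sup_{Q \in \mathscr{D}_t} \frac{\chi_{\widehat{Q}}(x)}{|\widehat{Q}|_\alpha}\int_{\widehat{Q}}|f|\,d\nu_\alpha$ is the dyadic maximal operator with respect to the unweighted measure $d\nu_\alpha$. Since $N$ depends only on $n$, it suffices to bound each $M_{\mathscr{D}_t}$ on $L^p(\omega\,d\nu_\alpha)$.

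Next I would establish the dyadic weighted bound $\|M_{\mathscr{D}_t}\|_{L^p(\omega\,d\nu_\alpha) \to L^p(\omega\,d\nu_\alpha)} \leqslant C[\omega]_{p,\alpha,\mathscr{D}_t}^{1/(p-1)}$ for each fixed $t$. The standard approach uses the sparse/Carleson decomposition on the dyadic structure: for $\lambda > 0$ one selects the maximal cubes $Q$ (in the tree $\mathscr{D}_t$) for which the average $\frac{1}{|\widehat{Q}|_\alpha}\int_{\widehat{Q}}|f|\,d\nu_\alpha$ exceeds $\lambda$, obtaining a pairwise-disjoint (after passing to the top-level cubes) family governing the level set $\{M_{\mathscr{D}_t} f > \lambda\}$. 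The key computation is then the familiar $A_p$-type estimate: on each selected Carleson box $\widehat{Q}$, one inserts the weight by writing $\int_{\widehat{Q}}|f|\,d\nu_\alpha = \int_{\widehat{Q}}|f|\,\omega^{1/p}\omega^{-1/p}\,d\nu_\alpha$ and applying H\"older's inequality with exponents $p$ and $p'$, which produces the factor $|\widehat{Q}|_{\omega^{1-p'},\alpha}^{1/p'}$. Feeding the definition of $[\omega]_{p,\alpha,\mathscr{D}_t}$ into the resulting sum and using the disjointness of the maximal cubes to control $\sum |\widehat{Q}|_{\omega,\alpha}$ by $\|f\|_{L^p(\omega\,d\nu_\alpha)}^p$ gives the claimed power $\tfrac{1}{p-1}$ on the constant. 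Finally, by Lemma~\ref{weight-equ} (or directly by \eqref{weight-constant}), $[\omega]_{p,\alpha,\mathscr{D}_t} \leqslant c_4[\omega]_{p,\alpha}$, so collecting the $N$ dyadic pieces yields $\|M_\alpha\|_{L^p(\omega\,d\nu_\alpha)\to L^p(\omega\,d\nu_\alpha)} \leqslant C_{11}[\omega]_{p,\alpha}^{1/(p-1)}$ with $C_{11} = C_{11}(n,\alpha)$.

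I expect the main obstacle to be the sharp tracking of the exponent $\tfrac{1}{p-1}$ through the Carleson/sparse argument. The boundedness of the dyadic maximal operator on $L^p(\omega\,d\nu_\alpha)$ for $B_{p,\alpha}$ weights is classical, but obtaining the \emph{linear-in-$[\omega]^{1/(p-1)}$} dependence — rather than merely some finite power — requires care in the H\"older step and in estimating the tail sum over the stopping cubes; the naive argument produces a suboptimal power. One clean route around this is to invoke the known abstract result that the Hardy--Littlewood maximal operator on a space of homogeneous type satisfies $\|M\|_{L^p(\omega)\to L^p(\omega)} \lesssim [\omega]_{A_p}^{1/(p-1)}$ (the Buckley-type bound), applied to the homogeneous space $(\partial\B_n,\rho)$ with the Carleson-box geometry, so that the $A_p$ constant there matches $[\omega]_{p,\alpha,\mathscr{D}}$; since the doubling Lemma~\ref{doubling} and Lemma~\ref{area-ball} guarantee the measure $d\nu_\alpha$ restricted to Carleson boxes behaves like a doubling measure on the base sphere, this transfer is legitimate and delivers the sharp exponent without re-deriving the stopping-time machinery.
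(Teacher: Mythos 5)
Your proposal is correct and is essentially the route the paper intends: the paper offers no written proof of this lemma at all, only the remark that one can consult Lerner's elementary argument \cite{Lerner2008}, and your plan (reduce $M_\alpha$ pointwise to the finitely many dyadic maximal operators over the Carleson-box systems of Proposition \ref{dyadic-thm}, compare $[\omega]_{p,\alpha,\mathscr{D}}$ with $[\omega]_{p,\alpha}$ via Lemma \ref{weight-equ}, then apply the Buckley-type bound) reconstructs exactly that. One caution: the stopping-time/H\"older computation you sketch in the middle is the weak-type argument and, as you yourself note, does not by itself yield the strong-type exponent $\tfrac{1}{p-1}$; the clean way to get it is Lerner's pointwise domination $M_{\mathscr{D}_t}f \leqslant [\omega]_{p,\alpha,\mathscr{D}_t}^{1/(p-1)}\bigl(M^{\sigma}_{\mathscr{D}_t}(|f|\sigma^{-1})^{p'-1}\bigr)^{1/(p'-1)}$ with $\sigma=\omega^{1-p'}$, combined with the universal bound $\|M^{\mu}_{\mathscr{D}_t}\|_{L^p(\mu)\to L^p(\mu)}\leqslant p'$, which applies here because the Carleson boxes $\{\widehat{Q}:Q\in\mathscr{D}_t\}$ inherit the nested-or-disjoint structure of $\mathscr{D}_t$. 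With that substitution (which is your stated fallback), the argument is complete and matches the cited source.
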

One can see Lerner's work \cite{Lerner2008} for a simple proof.
Since $p>2$ and $[\omega]_{p,\alpha}\geqslant1$, we have
\begin{equation}\label{max}
	||M_{\alpha}||_{L^p(\omega d\nu_\alpha)\rightarrow L^p(\omega d\nu_\alpha)}\leqslant C_{11}[\omega]_{p,\alpha}^{\frac{1}{p-1}}\leqslant C_{11}[\omega]_{p,\alpha}.
\end{equation}
For $h\in L^{p'/\phi(p)}(\omega d\nu_\alpha)$, denote 
\[
	S_{\omega,\alpha}(h)=\left(\frac{M_\alpha(|h|^{1/\phi(p)}\omega)}{\omega}\right)^{\phi(p)}.
\]
\begin{lemma}\label{weight}
	For all $h\in L^{p'/\phi(p)}(\omega d\nu_\alpha)$ and weight $\omega$, we have 
	\[
		\left(\frac{1}{|\widehat{B}|_\alpha}\int_{\widehat{B}}h \omega d\nu_\alpha\right)\left(\frac{1}{|\widehat{B}|_\alpha}\int_{\widehat{B}}(S_{\omega,\alpha}(h)\omega)^{-1} d\nu_\alpha\right)\leqslant [\omega]_{p,\alpha}^{\frac{1}{p-1}}.
	\]
\end{lemma}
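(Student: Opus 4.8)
The plan is to estimate the two factors of the product separately over a fixed box $\widehat B$ and then observe that a common average cancels. Throughout I would set
\[
A=\frac{1}{|\widehat B|_\alpha}\int_{\widehat B}|h|^{1/\phi(p)}\omega\,d\nu_\alpha,
\]
and record the elementary exponent identities $\phi(p)-1=-\frac{1}{p-1}=1-p'$ and $1-\phi(p)=\frac{1}{p-1}$; since $p>2$ one has $0<\phi(p)<1$, so the exponent $1/\phi(p)>1$. Because $S_{\omega,\alpha}$ depends only on $|h|$, while the first factor satisfies $\int_{\widehat B}h\,\omega\,d\nu_\alpha\leqslant\int_{\widehat B}|h|\,\omega\,d\nu_\alpha$, it suffices to treat $h\geqslant 0$.

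First I would rewrite the second factor using the definition of $S_{\omega,\alpha}$ and the identity $\phi(p)-1=1-p'$:
\[
(S_{\omega,\alpha}(h)\omega)^{-1}=M_\alpha(h^{1/\phi(p)}\omega)^{-\phi(p)}\,\omega^{1-p'}.
\]
For every $x\in\widehat B$ the definition of $M_\alpha$ forces $M_\alpha(h^{1/\phi(p)}\omega)(x)\geqslant A$, and since $\phi(p)>0$ this gives $M_\alpha(h^{1/\phi(p)}\omega)(x)^{-\phi(p)}\leqslant A^{-\phi(p)}$ on $\widehat B$. Integrating against $\omega^{1-p'}\,d\nu_\alpha$ yields
\[
\frac{1}{|\widehat B|_\alpha}\int_{\widehat B}(S_{\omega,\alpha}(h)\omega)^{-1}\,d\nu_\alpha\leqslant A^{-\phi(p)}\,\frac{|\widehat B|_{\omega^{1-p'},\alpha}}{|\widehat B|_\alpha}.
\]

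For the first factor I would apply Jensen's inequality with the probability measure $\frac{1}{|\widehat B|_{\omega,\alpha}}\omega\,d\nu_\alpha$ on $\widehat B$ and the convex function $t\mapsto t^{1/\phi(p)}$:
\[
\left(\frac{1}{|\widehat B|_{\omega,\alpha}}\int_{\widehat B}h\,\omega\,d\nu_\alpha\right)^{1/\phi(p)}\leqslant\frac{1}{|\widehat B|_{\omega,\alpha}}\int_{\widehat B}h^{1/\phi(p)}\omega\,d\nu_\alpha=\frac{|\widehat B|_\alpha}{|\widehat B|_{\omega,\alpha}}\,A.
\]
Raising to the power $\phi(p)$ and multiplying through by $|\widehat B|_{\omega,\alpha}/|\widehat B|_\alpha$, and using $1-\phi(p)=\frac{1}{p-1}$, gives
\[
\frac{1}{|\widehat B|_\alpha}\int_{\widehat B}h\,\omega\,d\nu_\alpha\leqslant\left(\frac{|\widehat B|_{\omega,\alpha}}{|\widehat B|_\alpha}\right)^{\frac{1}{p-1}}A^{\phi(p)}.
\]

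Finally, multiplying the two bounds, the factors $A^{\phi(p)}$ and $A^{-\phi(p)}$ cancel, leaving
\[
\left(\frac{|\widehat B|_{\omega,\alpha}}{|\widehat B|_\alpha}\right)^{\frac{1}{p-1}}\frac{|\widehat B|_{\omega^{1-p'},\alpha}}{|\widehat B|_\alpha}=\left[\frac{|\widehat B|_{\omega,\alpha}}{|\widehat B|_\alpha}\left(\frac{|\widehat B|_{\omega^{1-p'},\alpha}}{|\widehat B|_\alpha}\right)^{p-1}\right]^{\frac{1}{p-1}}\leqslant[\omega]_{p,\alpha}^{\frac{1}{p-1}},
\]
which is exactly the claim after taking the supremum over $\widehat B$. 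I do not expect a genuine obstacle here; the substance of the argument is the bookkeeping of exponents, and the one point to get right is that the cancellation of $A^{\phi(p)}$ is precisely what makes the bound scale-free. The correct setup is therefore to define $A$ as the common average of $h^{1/\phi(p)}\omega$ and to invoke Jensen with the convex exponent $1/\phi(p)>1$, which is where the hypothesis $p>2$ is used.
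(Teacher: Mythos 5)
Your proof is correct and follows essentially the same route as the paper: both hinge on the pointwise bound $M_\alpha(h^{1/\phi(p)}\omega)\geqslant\frac{1}{|\widehat B|_\alpha}\int_{\widehat B}h^{1/\phi(p)}\omega\,d\nu_\alpha$ on $\widehat B$ together with the cancellation of the average of $h^{1/\phi(p)}\omega$ raised to the powers $\pm\phi(p)$. Your Jensen step with the measure $\omega\,d\nu_\alpha/|\widehat B|_{\omega,\alpha}$ is just a reformulation of the paper's H\"older application with exponents $1/\phi(p)$ and $1/(1-\phi(p))$, yielding the identical intermediate bound.
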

\begin{proof}
	Observe for $x\in\widehat{B}$
	\[
		M_\alpha (h^{1/\phi(p)}\omega)(x)\geqslant \frac{1}{|\widehat{B}|_\alpha}\int_{\widehat{B}}h^{1/\phi(p)}\omega d\nu_\alpha,
	\]
	Hence, by H\"older's inequality, we have
	\begin{equation*}
		\begin{aligned}
			&\left(\frac{1}{|\widehat{B}|_\alpha}\int_{\widehat{B}}h \omega d\nu_\alpha\right)\left(\frac{1}{|\widehat{B}|_\alpha}\int_{\widehat{B}}(S_{\omega,\alpha}(h)\omega)^{-1} d\nu_\alpha\right)\\
			=&\left(\frac{1}{|\widehat{B}|_\alpha}\int_{\widehat{B}}h \omega d\nu_\alpha\right)\left(\frac{1}{|\widehat{B}|_\alpha}\int_{\widehat{B}}(M_\alpha (h^{1/\phi(p)}\omega))^{-\phi(p)}\omega^{1-p'} d\nu_\alpha\right)\\
			\leqslant& \left( \frac{1}{|\widehat{B}|_\alpha}\int_{\widehat{B}}h^{1/\phi(p)}\omega d\nu_\alpha\right)^{\phi(p)}\left(\frac{1}{|\widehat{B}|_\alpha}\int_{\widehat{B}} \omega d\nu_\alpha\right)^{\frac{1}{p-1}}\\
			&\times \left( \frac{1}{|\widehat{B}|_\alpha}\int_{\widehat{B}}h^{1/\phi(p)}\omega d\nu_\alpha\right)^{-\phi(p)}\left(\frac{1}{|\widehat{B}|_\alpha}\int_{\widehat{B}} \omega^{1-p'} d\nu_\alpha\right)\\
			\leqslant&[\omega]_{p,\alpha}^{\frac{1}{p-1}}.
		\end{aligned}
	\end{equation*}
\end{proof}

Furthermore, by \eqref{max}, we have
\begin{equation*}
	\begin{aligned}
		||S_{\omega,\alpha}(h)||_{L^{p'/\phi(p)}(\omega d\nu_\alpha)}
		&=\left(\int_{\B_n}|M_\alpha(|h|^{1/\phi(p)}\omega)|^{p'} \omega^{1-p'} d\nu_\alpha\right)^{\phi(p)/p'}\\
		&\leqslant \left(||M_{\alpha}||_{L^{p'}(\omega^{1-p'} d\nu_\alpha)\rightarrow L^{p'}(\omega^{1-p'} d\nu_\alpha)}\left\||h|^{1/\phi(p)}\omega\right\|_{L^{p'}(\omega^{1-p'} d\nu_\alpha)}\right)^{\phi(p)}\\
		&\leqslant ||M_{\alpha}||_{L^p(\omega d\nu_\alpha)\rightarrow L^p(\omega d\nu_\alpha)}^{\phi(p)}\|h\|_{L^{p'/\phi(p)}(\omega d\nu_\alpha)}\\
		&\leqslant C_{11}^{\phi(p)}[\omega]_{p,\alpha}^{\phi(p)}\|h\|_{L^{p'/\phi(p)}(\omega d\nu_\alpha)}.
	\end{aligned}
\end{equation*}
Denote 
\begin{equation}\label{norm}
	A=\sup\limits_{||h||_{L^{p'/\phi(p)}(\omega d\nu_\alpha)}=1}||S_{\omega,\alpha}(h)||_{L^{p'/\phi(p)}(\omega d\nu_\alpha)}\leqslant C_{11}^{\phi(p)}[\omega]_{p,\alpha}^{\phi(p)}.
\end{equation}
We define an operator 
\[
	D(h)=\sum\limits_{k=0}^\infty\frac{1}{2^k}\frac{(S_{\omega,\alpha})^k(h)}{A^k},
\]
where $(S_{\omega,\alpha})^k$ is k-times composition of $S_{\omega,\alpha}$ and $(S_{\omega,\alpha})^0$ is identity operator.
It is easy to see $|h|\leqslant D(h)$. 
Now we prove another what we want.
\begin{lemma}\label{2 to p}
	For all $h\geqslant0, ||h||_{L^{p'/\phi(p)}(\omega d\nu_\alpha)}=1$, we have
	\[
		[D(h)\omega]_{2,\alpha}\leqslant 2C_{11}^{\phi(p)}[\omega]_{p,\alpha}.
	\]
\end{lemma}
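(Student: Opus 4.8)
The plan is to reduce the entire estimate to the single pointwise domination
\[
S_{\omega,\alpha}(D(h))\leqslant 2A\, D(h)\qquad\text{on }\B_n,
\]
and then combine it with Lemma \ref{weight}. Since the relevant conjugate exponent is $p'=2$ when $D(h)\omega$ is tested against the $B_{2,\alpha}$ class, for every ball $B\subset\partial\B_n$ we must bound the product
\[
\left(\frac{1}{|\widehat{B}|_\alpha}\int_{\widehat{B}}D(h)\omega\, d\nu_\alpha\right)\left(\frac{1}{|\widehat{B}|_\alpha}\int_{\widehat{B}}(D(h)\omega)^{-1}\, d\nu_\alpha\right).
\]
Applying Lemma \ref{weight} with the test function $D(h)$ in place of $h$ controls the first average against the average of $(S_{\omega,\alpha}(D(h))\omega)^{-1}$; the pointwise bound above then gives $(S_{\omega,\alpha}(D(h))\omega)^{-1}\geqslant (2A)^{-1}(D(h)\omega)^{-1}$, so that the product is at most $2A\,[\omega]_{p,\alpha}^{1/(p-1)}$. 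Inserting $A\leqslant C_{11}^{\phi(p)}[\omega]_{p,\alpha}^{\phi(p)}$ from \eqref{norm} and using the arithmetic identity $\phi(p)+\frac{1}{p-1}=\frac{p-2}{p-1}+\frac{1}{p-1}=1$ yields $[D(h)\omega]_{2,\alpha}\leqslant 2C_{11}^{\phi(p)}[\omega]_{p,\alpha}$, which is exactly the claim.

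It therefore remains to prove the pointwise domination, and for this I would first show that $S_{\omega,\alpha}$ is \emph{sublinear}. Positive homogeneity of degree one is immediate from the homogeneity of $M_\alpha$ and the outer power $\phi(p)$. For subadditivity, write $\beta=1/\phi(p)\geqslant 1$ and note that for nonnegative $g_1,g_2$ and any fixed box $\widehat{B}\ni x$, Minkowski's inequality in $L^{\beta}\big(\widehat{B},\,|\widehat{B}|_\alpha^{-1}\omega\, d\nu_\alpha\big)$ gives
\[
\left(\frac{1}{|\widehat{B}|_\alpha}\int_{\widehat{B}}(g_1+g_2)^{\beta}\omega\, d\nu_\alpha\right)^{1/\beta}\leqslant\left(\frac{1}{|\widehat{B}|_\alpha}\int_{\widehat{B}}g_1^{\beta}\omega\, d\nu_\alpha\right)^{1/\beta}+\left(\frac{1}{|\widehat{B}|_\alpha}\int_{\widehat{B}}g_2^{\beta}\omega\, d\nu_\alpha\right)^{1/\beta}.
\]
Bounding each average on the right by $(M_\alpha(g_i^{\beta}\omega)(x))^{1/\beta}$ and then taking the supremum over all $\widehat{B}\ni x$ on the left produces $M_\alpha((g_1+g_2)^{\beta}\omega)^{1/\beta}\leqslant M_\alpha(g_1^{\beta}\omega)^{1/\beta}+M_\alpha(g_2^{\beta}\omega)^{1/\beta}$; dividing by $\omega^{1/\beta}$ gives $S_{\omega,\alpha}(g_1+g_2)\leqslant S_{\omega,\alpha}(g_1)+S_{\omega,\alpha}(g_2)$. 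A routine monotone-convergence argument extends this to countable sums.

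With sublinearity in hand, the geometric-series structure of $D$ closes the argument: applying $S_{\omega,\alpha}$ termwise to $D(h)=\sum_{k\geqslant0}2^{-k}A^{-k}(S_{\omega,\alpha})^{k}(h)$ and re-indexing,
\[
S_{\omega,\alpha}(D(h))\leqslant\sum_{k\geqslant0}\frac{(S_{\omega,\alpha})^{k+1}(h)}{2^{k}A^{k}}=2A\sum_{j\geqslant1}\frac{(S_{\omega,\alpha})^{j}(h)}{2^{j}A^{j}}\leqslant 2A\, D(h),
\]
which is precisely the pointwise bound needed above. I expect the only genuinely delicate point to be the sublinearity of $S_{\omega,\alpha}$: the outer exponent $\phi(p)$ and the inner exponent $\beta=1/\phi(p)$ must be matched so that Minkowski's inequality runs in the correct direction, and one must verify that the maximal operator $M_\alpha$ interacts with the power $\beta\geqslant1$ exactly as above rather than through the (false) reverse inequality. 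Once this is secured, the remaining steps are formal.
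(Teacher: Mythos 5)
Your argument is correct and follows essentially the same route as the paper: apply Lemma \ref{weight} with $D(h)$ in place of $h$, use the pointwise bound $S_{\omega,\alpha}(D(h))\leqslant 2A\,D(h)$ to pass from $(S_{\omega,\alpha}(D(h))\omega)^{-1}$ back to $(D(h)\omega)^{-1}$, and finish with $A\leqslant C_{11}^{\phi(p)}[\omega]_{p,\alpha}^{\phi(p)}$ and $\phi(p)+\tfrac{1}{p-1}=1$. The only difference is that the paper states $S_{\omega,\alpha}(D(h))\leqslant 2A\,D(h)$ as an unproved observation, whereas you supply the (correct) sublinearity argument via Minkowski's inequality with $\beta=1/\phi(p)\geqslant1$ and termwise application to the Rubio de Francia series.
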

\begin{proof}
	We have two observations:
	\begin{enumerate}
		\item $||D(h)||_{L^{p'/\phi(p)}(\omega d\nu_\alpha)}\leqslant 2||h||_{L^{p'/\phi(p)}(\omega d\nu_\alpha)}$;
		\item $S_{\omega,\alpha}(D(h))\leqslant 2AD(h)$.
	\end{enumerate}
	Hence, by Lemma \ref{weight} and \eqref{norm},
	\begin{equation*}
		\begin{aligned}
			&\left(\frac{1}{|\widehat{B}|_\alpha}\int_{\widehat{B}}D(h)\omega d\nu_\alpha\right)\left(\frac{1}{|\widehat{B}|_\alpha}\int_{\widehat{B}}(D(h)\omega)^{-1} d\nu_\alpha\right)\\
			\leqslant&2A\left(\frac{1}{|\widehat{B}|_\alpha}\int_{\widehat{B}}D(h)\omega d\nu_\alpha\right)\left(\frac{1}{|\widehat{B}|_\alpha}\int_{\widehat{B}}(S_{\omega,\alpha}(D(h))\omega)^{-1} d\nu_\alpha\right)\\
			\leqslant&2A[\omega]_{p,\alpha}^{\frac{1}{p-1}}\\
			\leqslant& 2C_{11}^{\phi(p)}[\omega]_{p,\alpha}.
		\end{aligned}
	\end{equation*}
\end{proof}

We conclude that for a weight $\omega\in B_{p,\alpha}$ and $0\leqslant h\in L^{p'/\phi(p)}(\omega d\nu_\alpha),||h||_{L^{p'/\phi(p)}(\omega d\nu_\alpha)}=1$, we have $D(h)\omega\in B_{2,\alpha}.$ 
Then by Lemma \ref{2 to p}, we have
\begin{equation*}
	\begin{aligned}
		 \int_{\B_n}|Rf|^2 h \omega d\nu_\alpha\leqslant&\int_{\B_n}|Rf|^2 D(h) \omega d\nu_\alpha\\
		\leqslant&C_8^2[D(h)\omega]_{2,\alpha}^2 \int_{\B_n}|f|^2 D(h) \omega d\nu_\alpha\\
		\leqslant&2C_8^2C_{11}^{2\phi(p)}[\omega]_{p,\alpha}^2||f||^2_{L^p(\omega d\nu_\alpha)}||D(h)||_{L^{p'/\phi(p)}(\omega d\nu_\alpha)}\\
		\leqslant&4C_8^2C_{11}^{2\phi(p)}[\omega]_{p,\alpha}^2||f||^2_{L^p(\omega d\nu_\alpha)}.
	\end{aligned}
\end{equation*}
This completes the proof.

\section{The sharp linear estimate for $P_\alpha$}\label{S:sharp}
By Theorem \ref{Thm-key}, if $\omega$ is a $B_{2,\alpha}$ weight, then there is a constant $C$ such that
\[
 ||P_\alpha||_{ L^2(\omega d\nu_\alpha)\to L^2(\omega d\nu_\alpha)}\leqslant C  [\omega]_{2,\alpha}.
\]
It follows that the norm $ ||P_\alpha||_{ L^2(\omega d\nu_\alpha)\to L^2(\omega d\nu_\alpha)}$ has linear growth with respect to the Bekoll\'e-Bonami constant. In this section, we shall show that this linear growth is sharp.

\vspace{0.1in}
For $n\geqslant 3,$ let $-1<\alpha<\infty$ and $0<\delta<1$. 
Consider the weight on the unit ball $\B_n$ defined in Subsection \ref{S:sharp-example}: $$\omega(x)=(1-|x|^2)^s$$ with $s=(\alpha+1)(1-\delta)$. 

By the \eqref{example-weight}, there is a constant $c_3>0$ such that the Bekoll\'e-Bonami constant has an upper estimate:
\[
	[\omega]_{2,\alpha}\leqslant\frac{c_3}{\delta}.
\]

Let $r_0=1$, $e_1=(1,0,\dots,0)$ and $B_0=B_\rho(e_1,r_0).$
We consider a function $$f(x)=(1-|x|^2)^{-s}\chi_{\widehat{B_0}}(x).$$
Then Lemma \ref{area-ball} yields the norm estimate
\begin{equation*}
	\begin{aligned}
		||f||_{L^2(\omega d\nu_\alpha)}^2&= \frac{c_\alpha}{c_{\alpha-s}}|\widehat{B}_0|_{\alpha-s}\\
		&\leqslant \frac{c_{10}}{\delta},
	\end{aligned}
\end{equation*}
where $c_{10}=C_2c_\alpha/(\alpha+1).$

\vspace{0.1in}
For $\gamma>0$ and $\xi\in\partial\B_n$, we define the nontangential approach region with vertex $\xi$:
\[
	\Omega_\gamma(\xi):=\{y\in\B_n:0<\langle y,\xi \rangle<1,|y|^2-|\langle y,\xi \rangle|^2<\gamma^2(1-|\langle y,\xi \rangle|)^2\}.
\]
In fact, $\Omega_\gamma(\xi)$ is a truncated circular cone in $\B_n$ with vertex at $\xi$.
Note 
\[\sin(\arctan(\gamma))=\frac{\gamma}{\sqrt{1+\gamma^2}}.\]
Then there exists half a ball with center at $0$ and radius $\gamma/\sqrt{1+\gamma^2}$ contained in $\Omega_\gamma(\xi)$.
Let
\[
	G=\left\{x\in\B_n:\rho\left(\frac{x}{|x|},e_1\right)<\frac{\pi}{2}-r_0, |x|<\frac{\gamma}{\sqrt{1+\gamma^2}}\right\}.
\]
Then 
\[
G\subset \bigcap\limits_{\xi\in B_0}\Omega_\gamma(\xi).
\]
For some fixed $\gamma<1/2$, by \"Ureyen \cite[Proposition 6.2]{Ureyen2023}, there exists a constant $c_{11}=c_{11}(n,\alpha,\gamma)>0$ such that  for all $x\in G$ and $y\in \widehat{B}_0$:
\[
	\mathcal{R}_\alpha(x,y)\geqslant c_{11}.
\]
Then by Lemma \ref{area-ball}, it yields the lower bound when $x\in G$
\begin{equation*}
	\begin{aligned}
		P_\alpha(f)(x)&=\int_{\B_n}\mathcal{R}_\alpha(x,y) f(y) d\nu_\alpha(y)\\
		&\geqslant c_{11}\frac{c_\alpha}{c_{\alpha-s}}|\widehat{B}_0|_{\alpha-s}\\
		&\geqslant \frac{c_{12}}{\delta}.
	\end{aligned}
\end{equation*}
Here $c_{12}=C_1c_\alpha c_{11}/(\alpha+1)$.
\vspace{0.1in}

Therefore, we have
\begin{equation*}
	\begin{aligned}
		||P_\alpha(f)||_{L^2(\omega d\nu_\alpha)}^2&\geqslant\int_{G}|P_\alpha(f)(x)|^2 \omega(x) d\nu_\alpha(x)\\
		&\geqslant\frac{c_{12}|G|_\alpha}{\delta^2}\\
		&\geqslant\frac{c_{12}|G|_\alpha}{c_3c_{10}}[\omega]_{2,\alpha} ||f||_{L^2(\omega dA_\alpha)}^2.
	\end{aligned}
\end{equation*}
This completes the proof.

\vspace{0.5in}
\noindent\textbf{Acknowledgement} This work was supported by the National Natural Science Foundation of China (Grant No. 12231005;12471116), and National Key R$\&$D Program of China (2024YFA1013400).

\end{document}